\newtheorem{theorem}{Theorem}[section]
\newtheorem{proposition}[theorem]{Proposition}
\newtheorem{lemma}[theorem]{Lemma}
\theoremstyle{definition}
\theoremstyle{remark}
\newtheorem{remark}[theorem]{Remark}
\numberwithin{equation}{section}
\newcommand{\Z}{\mathbb{Z}}
\newcommand{\Q}{\mathbb{Q}}
\newcommand{\R}{\mathbb{R}}
\newcommand{\C}{\mathbb{C}}
\renewcommand{\H}{\mathbb{H}}
\renewcommand{\O}{\mathbb{O}}
\newcommand{\E}{\mathrm{E}}
\newcommand{\F}{\mathrm{F}}
\newcommand{\G}{\mathrm{G}}
\newcommand{\U}{\mathrm{U}}
\newcommand{\SU}{\mathrm{SU}}
\newcommand{\SO}{\mathrm{SO}}
\newcommand{\Sp}{\mathrm{Sp}}
\newcommand{\PSp}{\mathrm{PSp}}
\newcommand{\Spin}{\mathrm{Spin}}
\newcommand{\Sq}{\operatorname{Sq}}
\title[Homotopy commutativity in symmetric spaces]{Homotopy commutativity in symmetric spaces}
\author[Daisuke Kishimoto]{Daisuke Kishimoto}
\address{Faculty of Mathematics, Kyushu University, Fukuoka 819-0395, Japan}
\email{kishimoto@math.kyushu-u.ac.jp}
\author[Yuki Minowa]{Yuki Minowa}
\address{Department of Mathematics, Kyoto University, Kyoto, 606-8502, Japan}
\email{minowa.yuki.48z@st.kyoto-u.ac.jp}
\author[Toshiyuki Miyauchi]{Toshiyuki Miyauchi}
\address{Department of Applied Mathematics, Faculty of Science, Fukuoka University, Fukuoka 814-0180, Japan}
\email{miyauchi@math.sci.fukuoka-u.ac.jp}
\author[Yichen Tong]{Yichen Tong}
\address{Department of Mathematics, Kyoto University, Kyoto 606-8502, Japan}
\email{tong.yichen.25m@st.kyoto-u.ac.jp}
\date{\today}
\subjclass[2010]{55P35, 55Q15}
\keywords{homotopy commutativity, symmetric space, Samelson product, Whitehead product}
\begin{document}
	
	\maketitle
	
	\begin{abstract}
		We extend the former results of Ganea and the two of the authors with Takeda on the homotopy commutativity of the loop spaces of Hermitian symmetric spaces such that the loop spaces of all irreducible symmetric spaces but $\C P^3$ are not homotopy commutative.
	\end{abstract}

	
	\section{Introduction}
	
	It is an interesting problem to determine whether or not a given H-space is homotopy commutative. The most celebrated result on this problem is Hubbuck's torus theorem \cite{H} which states that a connected finite H-space is homotopy commutative if and only if it is homotopy equivalent to a torus. For studying this problem for infinite H-spaces, we need to fix a particular class, as there are vast classes of infinite H-spaces, each of which has its special feature.

	We consider the loop space of a simply-connected finite complex, which is an infinite H-space whenever the underlying finite complex is non-contractible. Ganea \cite{Ga} studied the homotopy nilpotency, including the homotopy commutativity, of the loop space of a complex projective space, and showed that the loop space of $\C P^n$ is homotopy commutative if and only if $n=3$. Recently, Golasi\'{n}ski \cite{Go} studied the loop space of a homogeneous space, and in particular, he proved that complex Grassmannians are homotopy nilpotent. However, from this result, we cannot deduce the homotopy commutativity in most cases.

	Recall that every symmetric space decomposes into a product of irreducible symmetric spaces, and as in \cite{Ca}, irreducible symmetric spaces are classified such that they are the homogeneous spaces $G/H$ in Table \ref{table} below. Then complex projective spaces and complex Grassmanians are the irreducible symmetric spaces $\mathrm{AIII}$. The symmetric spaces $\mathrm{AIII}$, $\mathrm{BDI}$, $\mathrm{CI}$, $\mathrm{DIII}$, $\mathrm{EIII}$, $\mathrm{EVII}$ are called irreducible Hermitian symmetric spaces, which are exactly symmetric spaces having almost complex structures. Continuing the works of Ganea \cite{Ga} and Golasi\'{n}ski \cite{Go}, two of the authors and Takeda \cite{KTT} studied the homotopy commutativity of the loop spaces of Hermitian symmetric spaces, and obtained that the loop spaces of all irreducible Hermitian symmetric spaces but $\C P^3$ are not homotopy commutative.

	In this paper, we study the homotopy commutativity of all irreducible symmetric spaces, and extend the above result on Hermitian symmetric spaces as:

	\begin{theorem}
		\label{main}
		The loop spaces of all irreducible symmetric spaces but $\C P^3$ are not homotopy commutative.
	\end{theorem}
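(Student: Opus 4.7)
The strategy is to exhibit, for every irreducible symmetric space $X$ of Table~\ref{table} other than $\C P^{3}$, a non-trivial Samelson product in $\Omega X$; equivalently, by the Whitehead--Samelson adjunction, a non-trivial Whitehead product in $\pi_{*}(X)$. Since homotopy commutativity of $\Omega X$ forces all Samelson products to vanish, this suffices. The Hermitian entries of Table~\ref{table} are already dispatched by \cite{Ga} and \cite{KTT}, so the remaining work is on the non-Hermitian type I spaces (AI with $n\geq 3$, AII, non-Hermitian BDI, CII, and the exceptional EI, EII, EIV, EV, EVI, EVIII, EIX, FI, FII, G) together with the type II spaces, i.e.\ the simply connected compact simple Lie groups themselves.

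For a type I space $X = G/H$ the main tool is the compatibility of Whitehead and Samelson products with the principal fibration $H \to G \to X$: given $\alpha \in \pi_{p}(G)$ and $\beta \in \pi_{q}(G)$ projecting to $\bar\alpha, \bar\beta \in \pi_{*}(X)$, the Whitehead product $[\bar\alpha,\bar\beta]_{X}$ agrees up to sign with the image under $G \to X$ of the group-theoretic Samelson product $\langle\alpha,\beta\rangle_{G}$. Since $G$ is a Lie group, $\langle\alpha,\beta\rangle_{G}$ is accessible in low dimensions from the Lie bracket and in higher dimensions via Bott--Samelson generators of $H_{*}(\Omega G)$; non-triviality in $X$ then reduces to showing $\langle\alpha,\beta\rangle_{G}$ does not lie in the image of $\pi_{*}(H) \to \pi_{*}(G)$. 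I would detect this by running a Steenrod-operation argument on characteristic classes in $H^{*}(BG;\mathbb{F}_{p})$ and $H^{*}(BH;\mathbb{F}_{p})$ with $p$ chosen according to the torsion structure of $G/H$, adapting the framework already used in \cite{KTT} for the Hermitian cases.

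For the type II spaces one must work directly with $\Omega G$, because primary Whitehead products in $G$ all vanish (Lie groups being H-spaces). Here my plan is to exploit the non-triviality of the commutator map $\Omega G \wedge \Omega G \to \Omega G$ for every simply connected compact simple Lie group, detected by its effect on a polynomial generator of $H^{*}(\Omega G;\mathbb{F}_{p})$ of odd degree for a suitable prime. The base case $G = \SU(2) \simeq S^{3}$ is classical, and the argument propagates to every simply connected simple $G$ via the H-map $\SU(2) \hookrightarrow G$ that realizes the generator of $\pi_{3}(G)$ and transfers an obstructing higher Samelson product from $\Omega\SU(2)$ into $\Omega G$.

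The principal obstacle will be the uniform treatment of the four infinite families AI, AII, non-Hermitian BDI, and CII, where the Samelson calculation in the ambient Lie group must be done with the rank parameters free and the non-triviality must survive the projection to the quotient for every admissible value. In tandem one has to verify low-rank degenerations---notably $\SU(4)/\Sp(2) \simeq S^{5}$, the spheres $\SO(n+1)/\SO(n) = S^{n}$ arising as BDI with $q = 1$, and the quaternionic projective spaces $\Sp(n+1)/(\Sp(n) \times \Sp(1)) = \H P^{n}$ arising as CII with $q = 1$---separately, in order to confirm that among the resulting accidental isomorphisms only $\C P^{3}$ escapes the argument.
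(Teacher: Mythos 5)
Your top-level strategy (exhibit a non-trivial Whitehead product in each space and invoke the Whitehead--Samelson adjunction, citing \cite{Ga,KTT} for the Hermitian cases) is the same as the paper's, but the main tool you propose for the type I spaces is not correct. If $\alpha,\beta\in\pi_*(G)$ project to $\bar\alpha,\bar\beta\in\pi_*(G/H)$, then $[\bar\alpha,\bar\beta]=p_*[\alpha,\beta]_G=0$ automatically, because all Whitehead products in the H-space $G$ vanish; so no non-triviality can ever be extracted from classes that lift to $G$. Moreover the identity you assert is degree-inconsistent: $\langle\alpha,\beta\rangle_G$ lives in $\pi_{p+q}(G)$ while $[\bar\alpha,\bar\beta]$ lives in $\pi_{p+q-1}(G/H)$. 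The usable relation goes the other way, through the connecting map into the \emph{fiber}: $\partial[\bar\alpha,\bar\beta]=\pm\langle\partial\bar\alpha,\partial\bar\beta\rangle_H$, so one needs classes of $G/H$ that do \emph{not} lift to $G$. The paper instead works directly in $G/H$ (or in a classifying space it maps to): it uses minimal Sullivan models to detect rational Whitehead products for $\mathrm{EII}$, $\mathrm{EV}$, $\mathrm{EVI}$, $\mathrm{EVIII}$, $\mathrm{EIX}$, $\mathrm{FI}$, and the Steenrod-operation criterion of \cite{KTT} applied to explicit generating maps ($\R P^{n-1}\to\SO(n)$, the quaternionic quasi-projective spaces $Q_n\to\Sp(n)$, bottom cells) for $\mathrm{AI}$, $\mathrm{BDI}$, $\mathrm{CII}$, $\mathrm{EI}$, $\mathrm{FII}$, $\G$.

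Even if you repaired the fibration argument, your plan has no mechanism for $\mathrm{AII}=\SU(2n)/\Sp(n)$ and $\mathrm{EIV}=\E_6/\F_4$, and this is where the paper's genuinely new idea lives. These spaces are H-spaces when localized at any odd prime and are rationally products of odd spheres, so rational and odd-primary Whitehead products give nothing; mod $2$ their cohomology is exterior on odd-degree generators with $\Sq$ preserving the span of the generators, so no decomposable Steenrod image of the kind your $BG$/$BH$ characteristic-class argument (or the \cite{KTT} criterion) requires exists. The paper resolves this by assuming $[g,g]=0$ for a generating map $g$, building a ``partial projective plane'' from the resulting Hopf construction, and contradicting Thomas's theorem on Steenrod squares in truncated polynomial algebras of height $3$ (the bottom generator would have to sit in degree $2^k$). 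Finally, the type II spaces (group manifolds) you add are outside the paper's Table~\ref{table} and are not treated; your proposed propagation from $\Omega\SU(2)$ would in any case need a detection argument, since non-commutativity does not automatically push forward along the loop map $\Omega\SU(2)\to\Omega G$.
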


	As in Lemma \ref{non-commutative} below, if we can find a non-trivial Whitehead product in a path-connected space $X$, then we can deduce that the loop space of $X$ is not homotopy commutative. In \cite{KTT}, the existence of a non-trivial Whitehead product is proved by applying criteria in terms of rational homotopy theory and Steenrod operations. In this paper, these criteria will be employed too, where the rational homotopy criterion will be elaborated. However, these criteria are not applicable to the symmetric spaces $\mathrm{AII}$ and $\mathrm{EIV}$, where they are H-spaces when localized at any odd prime. Then we will develop a new cohomological technique for showing the existence of a non-trivial Whitehead product, which applies to $\mathrm{AII}$ and $\mathrm{EIV}$.

	\subsection*{Acknowledgement}
	
	The authors were partially supported by JSPS KAKENHI Grant Number JP17K05248 and JP1903473 (Kishimoto), JST, the establishment of university fellowships towards the creation of science technology innovation, Grant Number JPMJFS2123 (Minowa), and JST SPRING Grant Number JPMJSP2110 (Tong).

	\renewcommand{\arraystretch}{1.3}
	
	\begin{table}[htbp]
		\label{table}
		\centering
		\caption{Irreducible symmetric spaces}
		\begin{tabular}{|l|lll|}
			\hline
			&$G$&$H$&\\\hline
			$\mathrm{AI}$&$\SU(n)$&$\SO(n)$&($n\ge 2$)\\
			$\mathrm{AII}$&$\SU(2n)$&$\Sp(n)$&($n\ge 2$)\\
			$\mathrm{AIII}$&$\U(m+n)$&$\U(m)\times\U(n)$&($m,n\ge 1$)\\
			$\mathrm{BDI}$&$\SO(m+n)$&$\SO(m)\times\SO(n)$&($m,n\ge 2$)\\
			$\mathrm{DIII}$&$\SO(2n)$&$\U(n)$&($n\ge 2$)\\
			$\mathrm{CI}$&$\Sp(n)$&$\U(n)$&($n\ge 2$)\\
			$\mathrm{CII}$&$\Sp(m+n)$&$\Sp(m)\times\Sp(n)$&($m,n\ge 1$)\\
			$\mathrm{EI}$&$\E_6$&$\PSp(4)$&\\
			$\mathrm{EII}$&$\E_6$&$\SU(6)\times \SU(2)$&\\
			$\mathrm{EIII}$&$\E_6$&$\Spin(10)\cdot S^1$&($\Spin(10)\cap S^1\cong\Z_4$)\\
			$\mathrm{EIV}$&$\E_6$&$\F_4$&\\
			$\mathrm{EV}$&$\E_7$&$\SU(8)/\{\pm I\}$&\\
			$\mathrm{EVI}$&$\E_7$&$\Spin(12)\cdot\SU(2)$&($\Spin(12)\cap\SU(2)\cong\Z_2$)\\
			$\mathrm{EVII}$&$\E_7$&$\E_6\cdot S^1$&($E_6\cap S^1\cong\Z_3$)\\
			$\mathrm{EVIII}$&$\E_8$&$\mathrm{Ss}(16)$&\\
			$\mathrm{EIX}$&$\E_8$&$\E_7\cdot\SU(2)$&($E_7\cdot\SU(2)\cong\Z_2$)\\
			$\mathrm{FI}$&$\F_4$&$\Sp(3)\cdot\Sp(1)$&($\Sp(3)\cap\Sp(1)\cong\Z_2$)\\
			$\mathrm{FII}$&$\F_4$&$\Spin(9)$&\\
			$\mathrm{G}$&$\G_2$&$\SO(4)$&\\\hline
		\end{tabular}
	\end{table}

	
	\section{Rational homotopy}
	
	In this section, we consider the existence of a non-trivial Whitehead product by using rational homotopy theory, and apply it to the symmetric spaces $\mathrm{EII}$, $\mathrm{EV}$, $\mathrm{EVI}$, $\mathrm{EVIII}$, $\mathrm{EIX}$, $\mathrm{FI}$. We start by showing a sufficient condition for a loop space not being homotopy commutative.

	\begin{lemma}
		\label{non-commutative}
		Let $X$ be a path-connected space. If there is a non-trivial Whitehead product in $X$, then the loop space of $X$ is not homotopy commutative.
	\end{lemma}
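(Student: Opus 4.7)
The plan is to reduce the non-triviality of a Whitehead product in $X$ to the non-triviality of a Samelson product in $\Omega X$, and then observe that homotopy commutativity of $\Omega X$ forces all Samelson products to vanish.

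Suppose $[\alpha,\beta]\in\pi_{p+q-1}(X)$ is a non-trivial Whitehead product for some $\alpha\in\pi_p(X)$ and $\beta\in\pi_q(X)$. Passing through the loop-suspension adjunction, let $\tilde\alpha\in\pi_{p-1}(\Omega X)$ and $\tilde\beta\in\pi_{q-1}(\Omega X)$ denote the adjoints of $\alpha$ and $\beta$. The first step is to invoke the classical relation between Whitehead and Samelson products, namely that (up to a sign depending on $p,q$) the adjoint of the Samelson product $\langle\tilde\alpha,\tilde\beta\rangle\in\pi_{p+q-2}(\Omega X)$ equals the Whitehead product $[\alpha,\beta]$. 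Consequently the non-triviality of $[\alpha,\beta]$ is equivalent to the non-triviality of $\langle\tilde\alpha,\tilde\beta\rangle$.

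The second step is to recall the definition of the Samelson product: it is represented by the composition of $\tilde\alpha\wedge\tilde\beta\colon S^{p-1}\wedge S^{q-1}\to\Omega X\wedge\Omega X$ with the reduced commutator map $c\colon\Omega X\wedge\Omega X\to\Omega X$ sending $(x,y)$ to $xyx^{-1}y^{-1}$. If $\Omega X$ is homotopy commutative, then $c$ is null-homotopic, so every Samelson product in $\Omega X$ is trivial. Combining this with the first step yields $[\alpha,\beta]=0$, contradicting the hypothesis.

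The only conceptual point to confirm is the sign-aware adjunction between Whitehead and Samelson products, which is standard and can be cited from classical references (for instance Whitehead's \emph{Elements of Homotopy Theory}); no obstacle is expected here. The argument is otherwise formal, and I do not anticipate any serious difficulty beyond being precise about the adjoints and the direction of implication.
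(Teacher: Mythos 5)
Your argument is correct and is essentially the same as the paper's: the paper also passes from the non-trivial Whitehead product to a non-trivial Samelson product in $\Omega X$ via the adjunction, and then notes that homotopy commutativity would kill all Samelson products. Your write-up simply spells out the commutator-map step that the paper leaves implicit.
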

	
	\begin{proof}
		By the adjointness of Whitehead products and Samelson products, if there is a non-trivial Whitehead product in $X$, then there is a non-trivial Samelson product in $\Omega X$, implying that the loop space of $X$ is not homotopy commutative.
	\end{proof}

	We consider rational homotopy theory. For a positively graded vector space $V$ over $\Q$, let $\Lambda V$ denote the free commutative graded algebra generated by $V$. The following lemma follows from \cite[Proposition 13.16]{FHT}.

	\begin{lemma}
		\label{rational Whitehead product}
		Let $(\Lambda V,d)$ be the minimal Sullivan model for a simply-connected space $X$ of finite rational type. If there is $x\in V$ such that $dx$ is decomposable and includes the term $yz$ for $0\ne y,z\in V$, then there are $\alpha\in\pi_m(X)\otimes\Q$ and $\beta\in\pi_n(X)\otimes\Q$ such that the Whitehead product $[\alpha,\beta]$ is non-trivial in $\pi_{m+n-1}(X)\otimes\Q$, where $|y|=m$ and $|z|=n$.
	\end{lemma}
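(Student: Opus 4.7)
The plan is to invoke \cite[Proposition 13.16]{FHT} directly. That result provides a canonical degree-preserving isomorphism $V^n \cong \mathrm{Hom}_\Q(\pi_n(X) \otimes \Q, \Q)$ and, under it, identifies (up to sign) the quadratic part $d_1 \colon V \to \Lambda^2 V$ of the differential as the $\Q$-linear dual of the Whitehead product bracket on $\pi_*(X) \otimes \Q$.

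First I would extend $\{y, z\}$ to a homogeneous basis of $V$ and, via the isomorphism above, choose $\alpha \in \pi_m(X) \otimes \Q$ and $\beta \in \pi_n(X) \otimes \Q$ dual to $y$ and $z$, respectively. Minimality of $(\Lambda V, d)$ gives $dV \subseteq \Lambda^{\ge 2} V$, so the decomposition $d = d_1 + d_2 + \cdots$ with $d_k(V) \subseteq \Lambda^{k+1} V$ is well defined. The hypothesis that $dx$ is decomposable and contains the monomial $yz$ with nonzero coefficient then says exactly that the coefficient of $yz$ in $d_1 x$, expressed in the chosen basis, is nonzero.

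Applying the duality of \cite[Proposition 13.16]{FHT} to $x \in V^{m+n-1}$, the pairing $\langle x, [\alpha, \beta]\rangle$ agrees, up to a nonzero rational scalar (accounting for sign and for any symmetric-algebra multiplicity), with the pairing of $d_1 x$ against $\alpha \otimes \beta \in (\Lambda^2 V)^\vee$. Since the coefficient of $yz$ in $d_1 x$ is nonzero and we are working in characteristic zero, this pairing is nonzero, which forces $[\alpha, \beta] \neq 0$ in $\pi_{m+n-1}(X) \otimes \Q$.

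The only step that requires care is the bookkeeping of signs and of the symmetric/exterior structure of $\Lambda^2 V$. In particular, in the case $y = z$ one needs $y^2 \ne 0$ in $\Lambda V$, which forces $|y|$ to be even; this is precisely the degree parity that prevents the graded anti-symmetry of the Whitehead bracket from collapsing $[\alpha, \alpha]$. Once these conventions are pinned down, the conclusion is a direct translation of \cite[Proposition 13.16]{FHT}, and no further computation is needed.
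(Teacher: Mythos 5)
Your proposal is correct and takes essentially the same route as the paper: the paper gives no argument beyond the remark that the lemma ``follows from \cite[Proposition 13.16]{FHT}'', and your write-up is exactly the careful unpacking of that duality between the quadratic part $d_1$ of the differential and the Whitehead product, including the right observation that the case $y=z$ forces $|y|$ even so that $y^2\ne 0$ in $\Lambda V$.
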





	The following lemma enables us to apply Lemma \ref{rational Whitehead product}, in the special case, only by looking at rational cohomology.

	\begin{lemma}
		\label{formal minimal model}
		Let $X$ be a simply-connected finite complex such that
		\[
		H^*(X;\Q)=\Q[x_1,\ldots,x_n]/(\rho_1,\ldots,\rho_n)
		\]
		where $|x_1|,\ldots,|x_n|$ are even and all $\rho_1,\ldots,\rho_n$ are decomposable. Then the minimal Sullivan model for $X$ is given by
		\[
		\Lambda(x_1,\ldots,x_n,y_1,\ldots,y_n),\quad dx_i=0,\quad dy_i=\rho_i.
		\]
	\end{lemma}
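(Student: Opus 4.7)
The plan is to verify that the proposed cdga is a minimal Sullivan algebra whose cohomology is $H^*(X;\Q)$, and then to exhibit a quasi-isomorphism to $A_{PL}(X)$; by the uniqueness of minimal Sullivan models, this identifies it with the minimal Sullivan model of $X$.

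First I would check minimality of $(\Lambda(x_1,\ldots,x_n,y_1,\ldots,y_n),d)$. Since each $|x_i|$ is even and each $\rho_i$ is a polynomial in the $x_j$'s, the degree $|\rho_i|$ is even, so $|y_i|=|\rho_i|-1$ is odd, and $\Lambda(x_1,\ldots,x_n,y_1,\ldots,y_n)$ is a well-defined free graded commutative algebra. Ordering the generators as $x_1,\ldots,x_n,y_1,\ldots,y_n$ gives a Sullivan filtration since $dy_i=\rho_i\in\Q[x_1,\ldots,x_n]$, and $dx_i=0$ together with $dy_i=\rho_i$ being decomposable by hypothesis ensures $d(V)\subset\Lambda^{\ge 2}V$.

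Next I would compute the cohomology. Because $X$ is a finite complex, $H^*(X;\Q)$ is finite-dimensional; by the standard fact from commutative algebra that a quotient of $\Q[x_1,\ldots,x_n]$ by $n$ homogeneous elements is Artinian if and only if those elements form a regular sequence (Cohen--Macaulayness of the polynomial ring), the sequence $\rho_1,\ldots,\rho_n$ is regular in $\Q[x_1,\ldots,x_n]$. Viewing $(\Lambda(x_1,\ldots,x_n,y_1,\ldots,y_n),d)$ as the Koszul complex of this regular sequence, with the odd generators $y_i$ as exterior Koszul variables, its cohomology collapses to $\Q[x_1,\ldots,x_n]/(\rho_1,\ldots,\rho_n)=H^*(X;\Q)$ in $y$-degree zero.

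Finally I would build a cdga morphism $\phi\colon(\Lambda(x_i,y_j),d)\to A_{PL}(X)$: choose cocycles $\tilde x_i\in A_{PL}(X)$ representing the cohomology classes $x_i$; then $\rho_i(\tilde x_1,\ldots,\tilde x_n)$ is a cocycle representing $\rho_i=0$ in cohomology, so it is a coboundary $d\tilde y_i$ with $|\tilde y_i|=|y_i|$; set $\phi(x_i)=\tilde x_i$, $\phi(y_i)=\tilde y_i$ and extend multiplicatively. By construction $\phi^*$ sends $x_i$ to $x_i$, hence is surjective, and the matching Poincar\'e series from the cohomology computation force $\phi^*$ to be an isomorphism. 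The main obstacle is the cohomology computation — equivalently, the regularity of $\rho_1,\ldots,\rho_n$; once this is secured, the quasi-isomorphism is essentially formal and delivers the desired statement, incidentally showing that any such $X$ is formal (intrinsic formality of this class of cohomology algebras).
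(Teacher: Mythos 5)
Your proof is correct, and it diverges from the paper's in a way worth noting. The central computation is the same in substance: both arguments hinge on the fact that finite-dimensionality of $H^*(X;\Q)$ forces $\rho_1,\ldots,\rho_n$ to be a regular sequence (you justify this via Cohen--Macaulayness of $\Q[x_1,\ldots,x_n]$; the paper simply asserts it), and both then identify $H^*$ of the proposed algebra with $\Q[x_1,\ldots,x_n]/(\rho_1,\ldots,\rho_n)$ --- you phrase this as acyclicity of the Koszul complex of a regular sequence in positive exterior degree, while the paper runs a spectral sequence in which each $y_i$ is transgressive with transgression $\rho_i$. Where you genuinely part ways is the final identification step. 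The paper first proves that $X$ is \emph{formal}, invoking a lemma of Miller and Neisendorfer, and then notes that the evident map $A\to H^*(A)$ sending $x_i\mapsto x_i$, $y_i\mapsto 0$ is a quasi-isomorphism; formality then forces the minimal Sullivan algebra $A$ to be the minimal model of $X$. You instead construct a cdga morphism $\phi\colon A\to A_{PL}(X)$ directly, lifting the $x_i$ to cocycles and the resulting cocycles $\rho_i(\tilde x_1,\ldots,\tilde x_n)$ (which represent $0$) to coboundaries $d\tilde y_i$, and you verify that $\phi$ is a quasi-isomorphism from surjectivity of $\phi^*$ onto the generating classes together with the equality of Poincar\'e series. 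Your route is more self-contained: it needs no formality input and in fact yields formality of $X$ as a corollary rather than consuming it as an ingredient. The paper's route concentrates all the work in the cohomology computation and delegates the lifting problem to general formality machinery. Both arguments are complete and standard; yours is arguably the more economical of the two.
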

	
	\begin{proof}
		Since $X$ is a finite complex, $H^*(X;\Q)$ is a finite dimensional vector space. Then $\rho_1,\ldots,\rho_n$ is a regular sequence. Let $A$ be the differential graded algebra in the statement. Since $\rho_1,\ldots,\rho_n$ are decomposable, $A$ is a minimal Sullivan algebra. Let $I_k$ denote the degree $>k$ part of $\Lambda(x_1,\ldots,x_n)$, and let
		\[
		A_k=(\Lambda(x_1,\ldots,x_n)/I_k)\otimes\Lambda(y_1,\ldots,y_n).
		\]
		Then we get a sequence $0=A_{-1}\leftarrow A_0\leftarrow A_1\leftarrow\cdots$ of surjections which yields a spectral sequence $E_r$ converging to $H^*(A)$. It is easy to see that each $y_i$ is transgressive and the transgression image of $y_i$ is $\rho_i$. Then since $\rho_1,\ldots,\rho_n$ is a regular sequence, we get an isomorphism
		\[
		E_\infty=E_\infty^{*,0}\cong H^*(X;\Q).
		\]
		Since $E_\infty=E_\infty^{*,0}$, the extension problem is trivial, so $H^*(A)\cong H^*(X;\Q)$. Thus by \cite[Lemma 4.2]{MN}, $X$ is formal.

		There is a map $f\colon A\to H^*(A)$ of differential graded algebras, where $f(x_i)=x_i$ and $f(y_i)=0$. Clearly, the induced map $f^*\colon H^*(A)\to H^*(A)$ is surjective, hence an isomorphism because $H^*(A)\cong H^*(X;\Q)$ is a finite dimensional vector space. Thus since $X$ is formal, the differential graded algebra $A$ is the minimal Sullivan model for $X$, completing the proof.
	\end{proof}

	Now we consider the homotopy commutativity of symmetric spaces $\mathrm{EII}$, $\mathrm{EV}$, $\mathrm{EVI}$, $\mathrm{EVIII}$, $\mathrm{EIX}$, $\mathrm{FI}$ by applying Lemmas \ref{rational Whitehead product} and \ref{formal minimal model}.

	\begin{proposition}
		\label{EII}
		The loop space of $\mathrm{EII}$ is not homotopy commutative.
	\end{proposition}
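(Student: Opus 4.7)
By Lemma \ref{non-commutative} it suffices to exhibit a non-trivial Whitehead product in $\mathrm{EII}$; by Lemmas \ref{rational Whitehead product} and \ref{formal minimal model}, this will follow once we present $H^*(\mathrm{EII};\Q)$ as a polynomial ring on even-degree generators modulo a regular sequence of decomposable relations, at least one of which contains a monomial $yz$ with $y,z$ both among the generators (possibly equal).

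Because $\E_6$ and $\SU(6)\times\SU(2)$ have the same rank $6$, the Borel--Cartan description of equal-rank homogeneous spaces yields
\[
H^*(\mathrm{EII};\Q)\cong \Q[c_2,c_3,c_4,c_5,c_6,c'_2]\big/(\rho_1,\ldots,\rho_6),
\]
where $|c_i|=2i$, $|c'_2|=4$, and $\rho_1,\ldots,\rho_6$ are the images of a system of primitive generators of $H^*(B\E_6;\Q)$ of degrees $4,10,12,16,18,24$ under restriction to $B(\SU(6)\times\SU(2))$. All generators are already of even degree. The degree-$4$ relation $\rho_1=\alpha c_2+\beta c'_2$ is nonzero, and I would use it to eliminate one of the two degree-$4$ generators (say $c'_2$). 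If any higher $\rho_k$ still carries a linear term in one of the remaining $c_i$, iterate the elimination; since the resulting algebra is finite-dimensional over $\Q$, this terminates in a presentation on even-degree generators modulo a regular sequence of fully decomposable relations, to which Lemma \ref{formal minimal model} applies and delivers a minimal Sullivan model of the form $(\Lambda(x_1,\ldots,x_m,y_1,\ldots,y_m),d)$ with $dx_i=0$ and $dy_k$ a decomposable polynomial in the $x_j$'s.

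It then remains to pin down a quadratic monomial $x_i x_j$ in some $dy_k$. This requires computing the characteristic map $H^*(B\E_6;\Q)\to H^*(B(\SU(6)\times\SU(2));\Q)$ via a shared maximal torus: one expresses the fundamental $\E_6$-invariants of degrees $10,12,16,18,24$ as polynomials in $c_2,\ldots,c_6,c'_2$ and reads off the monomials of polynomial-degree exactly two. For example, in degree $12$ the candidate monomials beyond the linear term $c_6$ are $c_3^2$, $c_2c_4$, $c_2^2 c'_2$, $c_2(c'_2)^2$, and one expects at least one of these to appear in $\rho_3$ with nonzero coefficient; after eliminating $c'_2$ and possibly $c_6$, such a term descends to a genuine quadratic in the surviving generators. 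Once such a term $x_i x_j$ is located, Lemma \ref{rational Whitehead product} produces a non-trivial rational Whitehead product in $\pi_{|x_i|+|x_j|-1}(\mathrm{EII})\otimes\Q$, and Lemma \ref{non-commutative} finishes the proof. The main obstacle is precisely the Weyl-invariant bookkeeping for $\E_6$: one must verify that at least one quadratic cross term genuinely survives all the linear eliminations rather than being absorbed into, or cancelled by, contributions from other $\rho_k$'s.
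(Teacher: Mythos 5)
Your overall strategy is exactly the one the paper uses: reduce to exhibiting a non-trivial rational Whitehead product via Lemma \ref{non-commutative}, and obtain it from Lemmas \ref{formal minimal model} and \ref{rational Whitehead product} applied to a presentation of $H^*(\mathrm{EII};\Q)$ as a polynomial algebra on even generators modulo a regular sequence of decomposable relations, one of which contains a quadratic monomial. Your setup of the Borel--Cartan description for the equal-rank pair $(\E_6,\,\SU(6)\times\SU(2))$, the list of invariant degrees $4,10,12,16,18,24$, and the elimination of generators along linear terms are all sound and consistent with the known answer (after elimination one is left with generators in degrees $4,6,8$ and relations in degrees $16,18,24$).

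However, there is a genuine gap at the decisive step: you never actually verify that a quadratic monomial $x_ix_j$ survives in one of the final relations. You write that ``one expects'' such a term in degree $12$ and explicitly flag as ``the main obstacle'' the verification that a cross term survives the linear eliminations without being cancelled --- but that verification \emph{is} the proof. Without it, Lemma \ref{rational Whitehead product} cannot be invoked, since a relation all of whose terms involve three or more generators (or whose quadratic part cancels after substitution) would give no Whitehead product by this method. Note also that your proposed degree-$12$ relation is one of the ones that gets \emph{used up} in the elimination (it kills the degree-$12$ generator $c_6$), so the quadratic term must be located in one of the surviving relations of degree $16$, $18$, or $24$. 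The paper closes this gap not by carrying out the Weyl-invariant computation but by citing Ishitoya's determination of $H^*(\mathrm{EII})$ \cite{I2}, which gives directly
\[
H^*(\mathrm{EII};\Q)=\Q[x_4,x_6,x_8]/(\rho_{16},\rho_{18},\rho_{24})
\]
with all $\rho_i$ decomposable and $\rho_{16}$ containing the term $x_8^2$; from there the three lemmas apply verbatim. To complete your argument you would either need to perform the invariant-theoretic computation for $\E_6$ explicitly or, as the paper does, appeal to the literature for the final presentation.
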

	
	\begin{proof}
		In \cite{I2}, Ishitoya determined the integral cohomology of $\mathrm{EII}$. In particular, we have
		\[
		H^*(\mathrm{EII};\Q)=\Q[x_4,x_6,x_8]/(\rho_{16},\rho_{18},\rho_{24}),\quad|x_i|=|\rho_i|=i
		\]
		where $\rho_{16},\rho_{18},\rho_{24}$ are decomposable and $\rho_{16}$ includes the term $x_8^2$. Then by Lemmas \ref{rational Whitehead product} and \ref{formal minimal model}, there is a non-trivial Whitehead product in $\mathrm{EII}$, so by Lemma \ref{non-commutative}, the proof is finished.
	\end{proof}

	\begin{proposition}
		\label{EV-EVIII}
		The loop spaces of $\mathrm{EV}$ and $\mathrm{EVIII}$ are not homotopy commutative.
	\end{proposition}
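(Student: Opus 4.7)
The plan is to mirror the proof of Proposition \ref{EII}: for each of $\mathrm{EV}$ and $\mathrm{EVIII}$, write $H^*(X;\Q)$ in the minimal form of Lemma \ref{formal minimal model}, exhibit a relation whose decomposable expansion contains a product $yz$ of two generators, and conclude via Lemmas \ref{rational Whitehead product} and \ref{non-commutative}.

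Since both $(\E_7,\SU(8)/\{\pm I\})$ and $(\E_8,\mathrm{Ss}(16))$ are equal-rank pairs, Borel's theorem together with the computation of the Poincar\'e polynomials
\[
P_{\mathrm{EV}}(t)=\frac{(1-t^{20})(1-t^{24})(1-t^{28})(1-t^{36})}{(1-t^6)(1-t^8)(1-t^{10})(1-t^{14})},\quad P_{\mathrm{EVIII}}(t)=\frac{(1-t^{36})(1-t^{40})(1-t^{48})(1-t^{60})}{(1-t^8)(1-t^{12})(1-t^{16})(1-t^{20})}
\]
should yield minimal presentations
\[
H^*(\mathrm{EV};\Q)\cong \Q[c_3,c_4,c_5,c_7]/(\rho_{20},\rho_{24},\rho_{28},\rho_{36}),\quad H^*(\mathrm{EVIII};\Q)\cong \Q[z_8,z_{12},z_{16},z_{20}]/(\sigma_{36},\sigma_{40},\sigma_{48},\sigma_{60})
\]
with $|c_i|=2i$, $|z_j|=j$, and all $\rho_i,\sigma_j$ decomposable.

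For $\mathrm{EV}$, the desired quadratic term drops out cleanly. Reducing modulo $(c_3,c_4)$, the only decomposable monomials of the four prescribed degrees living in $\Q[c_5,c_7]$ are $c_5^2$, $c_5 c_7$, $c_7^2$, with nothing in degree $36$. Finite-dimensionality of the quotient (as a quotient of the finite-dimensional $H^*(\mathrm{EV};\Q)$) forces both $c_5$ and $c_7$ to be nilpotent, which in turn forces the coefficient of $c_5^2$ in $\rho_{20}$ and of $c_7^2$ in $\rho_{28}$ to be nonzero. Thus $\rho_{20}$ carries the pure product $c_5\cdot c_5$, and Lemmas \ref{formal minimal model}, \ref{rational Whitehead product}, and \ref{non-commutative} close out $\mathrm{EV}$.

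For $\mathrm{EVIII}$ the only candidates are $z_{16}z_{20}$ in $\sigma_{36}$ and $z_{20}^2$ in $\sigma_{40}$. The analogous reduction modulo $(z_8,z_{12})$ still guarantees that $\sigma_{48}$ contains $z_{16}^3$ and that at least one of $z_{20}^2\in\sigma_{40}$ or $z_{20}^3\in\sigma_{60}$ holds, but it does \emph{not} exclude the simultaneous vanishing of the coefficients of $z_{16}z_{20}$ in $\sigma_{36}$ and of $z_{20}^2$ in $\sigma_{40}$; this is the main obstacle. To settle it I would compute, or cite from the literature, the restriction $H^*(B\E_8;\Q)\to H^*(B\,\mathrm{Ss}(16);\Q)\cong\Q[p_1,\ldots,p_7,e]$ on the degree-$36$ or degree-$40$ Weyl-invariant generator of $H^*(B\E_8;\Q)$ and read off the relevant coefficient. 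Once at least one of the two coefficients is shown to be nonzero, the same three lemmas conclude the proof.
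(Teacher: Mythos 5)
Your overall route is the same as the paper's: both arguments run through Lemmas \ref{formal minimal model}, \ref{rational Whitehead product} and \ref{non-commutative} applied to the presentations $\Q[x_6,x_8,x_{10},x_{14}]/(\rho_{20},\rho_{24},\rho_{28},\rho_{36})$ and $\Q[y_8,y_{12},y_{16},y_{20}]/(\mu_{36},\mu_{40},\mu_{48},\mu_{60})$. For $\mathrm{EV}$ your argument is complete and in fact slightly more self-contained than the paper's: the paper simply quotes from \cite{HKMO,W} that $\rho_{20}$ contains the term $x_{10}^2$, whereas you deduce it from finite-dimensionality after killing the degree $6$ and $8$ generators. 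That deduction is sound: if the coefficient of $c_5^2$ in $\rho_{20}$ vanished, then modulo $(c_3,c_4)$ every remaining relation would lie in the ideal generated by $c_7$, so the image of $c_5$ could not be nilpotent, contradicting finite-dimensionality.

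For $\mathrm{EVIII}$, however, the proof is not finished. As you yourself observe, finite-dimensionality only forces $z_{16}^3$ to appear in $\sigma_{48}$ and one of $z_{20}^2\in\sigma_{40}$, $z_{20}^3\in\sigma_{60}$ to hold; and a cubic term such as $z_{20}^3$ is useless for Lemma \ref{rational Whitehead product}, which requires a genuine product $yz$ of two elements of $V$. So everything hinges on the coefficient of $z_{16}z_{20}$ in $\sigma_{36}$ or of $z_{20}^2$ in $\sigma_{40}$ being nonzero, and you never establish this: you only say you would ``compute, or cite from the literature'' the relevant restriction of Weyl-invariants. That computation (or citation) is precisely the substantive input, since the abstract shape of the presentation cannot rule out the bad case --- $\Q[z_{20}]/(z_{20}^3)$ is a perfectly consistent finite-dimensional quotient after reducing modulo the other generators. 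The paper closes this gap by invoking the explicit relations from \cite{HKMO} and \cite{W}, which show that $\mu_{40}$ does contain the term $y_{20}^2$; to complete your proof you must either carry out the invariant-theoretic computation you sketch or supply that reference.
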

	
	\begin{proof}
		Let $G$ and $H$ be as in Table \ref{table} for $\mathrm{EV}$ and $\mathrm{EVIII}$. Then there is an isomorphism
		\[
		H^*(G/H;\Q)\cong H^*(BT;\Q)^{W(H)}/(\widetilde{H}^*(BT;\Q)^{W(G)})
		\]
		where $T$ is a common maximal torus of $G,H$ and $W(G),W(H)$ denote the Weyl groups of $G,H$. So by \cite{HKMO,W}, we get
		\begin{align*}
			H^*(\mathrm{EV};\Q)&=\Q[x_6,x_8,x_{10},x_{14}]/(\rho_{20},\rho_{24},\rho_{28},\rho_{36})\\
			H^*(\mathrm{EVIII};\Q)&=\Q[y_8,y_{12},y_{16},y_{20}]/(\mu_{36},\mu_{40},\mu_{48},\mu_{60})
		\end{align*}
		where $|x_i|=|y_i|=|\rho_i|=|\mu_i|=i$, all $\rho_i,\mu_i$ are decomposable, and $\rho_{20},\mu_{40}$ include the terms $x_{10}^2,y_{20}^2$, respectively. Hence by Lemmas \ref{rational Whitehead product} and \ref{formal minimal model}, there are non-trivial Whitehead products in $\mathrm{EV}$ and $\mathrm{EVIII}$. Thus by Lemma \ref{non-commutative}, the proof is finished.
	\end{proof}

	\begin{proposition}
		\label{EVI-EIX-FI}
		The loop spaces of $\mathrm{EVI},\,\mathrm{EIX},\,\mathrm{FI}$ are not homotopy commutative.
	\end{proposition}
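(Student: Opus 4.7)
The plan is to proceed exactly as in the proofs of Propositions \ref{EII} and \ref{EV-EVIII}, using the Borel-type presentation of the rational cohomology of $G/H$ to exhibit a decomposable relation containing a product of two generators, and then invoking Lemmas \ref{rational Whitehead product}, \ref{formal minimal model}, and \ref{non-commutative}.

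First I would set up the general template. For each of $\mathrm{EVI}$, $\mathrm{EIX}$, $\mathrm{FI}$, with $G$ and $H$ as in Table \ref{table}, the pairs $(G,H)$ satisfy $\mathrm{rank}\,G=\mathrm{rank}\,H$, so that a common maximal torus $T$ exists and
\[
H^*(G/H;\Q)\cong H^*(BT;\Q)^{W(H)}/(\widetilde{H}^*(BT;\Q)^{W(G)}).
\]
Using the computations of Weyl group invariants in \cite{HKMO,W} (the same references as in Proposition \ref{EV-EVIII}), I would extract presentations of the form
\[
H^*(G/H;\Q)=\Q[x_{i_1},\ldots,x_{i_k}]/(\rho_{j_1},\ldots,\rho_{j_k}),
\]
with all $|x_i|$ even and all $\rho_j$ decomposable. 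The equality of the number of generators and of relations is automatic from the Poincar\'e polynomial matching $|W(G)/W(H)|$ and from the regular sequence phenomenon used in Lemma \ref{formal minimal model}.

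Next I would locate, in each of the three presentations, a single relation $\rho_j$ whose decomposable expansion contains a term of the form $yz$ with $y,z$ among the polynomial generators. By dimension counting, such a term is forced to appear in the lowest-degree relation in each case: for instance, the lowest relation of $\mathrm{EVI}$ (resp.\ $\mathrm{EIX}$, $\mathrm{FI}$) lies in a degree that is twice the degree of one of the generators, which is precisely the configuration exploited for $\mathrm{EV}$ and $\mathrm{EVIII}$. Once such a term is identified, Lemma \ref{formal minimal model} identifies the minimal Sullivan model with $\Lambda(x_{i_1},\ldots,x_{i_k},y_{j_1},\ldots,y_{j_k})$ with $dx_{i_a}=0$ and $dy_{j_a}=\rho_{j_a}$, and Lemma \ref{rational Whitehead product} produces a non-trivial rational Whitehead product. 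Lemma \ref{non-commutative} then finishes the proof in all three cases.

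The main obstacle will be the bookkeeping of the Weyl-invariant calculation: I must confirm that the low-degree relation of each quotient ring really does contain a product of two polynomial generators (rather than, say, being a pure power of a single generator or involving only higher-degree invariants that have been eliminated). This is a finite, explicit verification using the invariant-theoretic descriptions of $W(\mathrm{E}_7)$, $W(\mathrm{E}_8)$, $W(\mathrm{F}_4)$ and their subgroups $W(\Spin(12)\cdot\SU(2))$, $W(\mathrm{E}_7\cdot\SU(2))$, $W(\Sp(3)\cdot\Sp(1))$; no deeper input beyond \cite{HKMO,W} is needed, but one must be careful to write the relation in terms of the chosen generators rather than in the ambient polynomial ring on $BT$.
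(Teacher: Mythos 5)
Your overall strategy (Borel presentation, then Lemmas \ref{formal minimal model}, \ref{rational Whitehead product}, and \ref{non-commutative}) is the right family of ideas, but the step you defer to ``bookkeeping'' is exactly where the argument has a gap, and it is the reason the paper does not argue this way for these three spaces. The claim that ``by dimension counting, such a term is forced to appear in the lowest-degree relation'' is not a proof: degree considerations only tell you which monomials are \emph{allowed} to occur in a relation, never that a particular monomial such as $x_{12}^2$ occurs with non-zero coefficient. That non-vanishing is the entire content of the hypothesis of Lemma \ref{rational Whitehead product}, and it can only come from an actual invariant-theoretic computation. Moreover, the references \cite{HKMO,W} you propose to rely on do not contain presentations of $H^*(G/H;\Q)$ for $H=\Spin(12)\cdot\SU(2)$, $\E_7\cdot\SU(2)$, $\Sp(3)\cdot\Sp(1)$; what the literature (\cite{IT,N1,N2}) provides are the cohomology rings of the corresponding \emph{circle} quotients $\F_4/\Sp(3)\cdot S^1$, $\E_7/\Spin(12)\cdot S^1$, $\E_8/\E_7\cdot S^1$. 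As written, your proof therefore reduces the proposition to an unperformed, non-trivial computation.

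The paper circumvents precisely this point by working with those circle quotients, where the presentations are known: for instance $H^*(\F_4/\Sp(3)\cdot S^1;\Q)=\Q[x_2,x_8]/(\rho_{16},\rho_{24})$ with $\rho_{16}$ containing $x_8^2$, which by Lemmas \ref{formal minimal model} and \ref{rational Whitehead product} yields $\alpha,\beta\in\pi_8(\F_4/\Sp(3)\cdot S^1)\otimes\Q$ with $[\alpha,\beta]\neq 0$. It then transfers this to $\mathrm{FI}$ along the fibration $S^2=\Sp(1)/S^1\to\F_4/\Sp(3)\cdot S^1\xrightarrow{q}\mathrm{FI}$: since $\pi_*(S^2)\otimes\Q=0$ for $*\ge 4$, the map $q_*$ is a rational isomorphism on homotopy groups in degrees $\ge 5$, so $\alpha,\beta$ lift to $\tilde{\alpha},\tilde{\beta}\in\pi_8(\mathrm{FI})\otimes\Q$ and $q_*([\tilde{\alpha},\tilde{\beta}])=[\alpha,\beta]\neq 0$ by naturality of Whitehead products; $\mathrm{EVI}$ and $\mathrm{EIX}$ are handled identically via \cite{N1,N2}. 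To repair your argument you would need either to carry out the Weyl-invariant computation for the three symmetric spaces themselves, or to insert this fibration step and quote the published presentations of the circle quotients.
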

	
	\begin{proof}
		We consider $\mathrm{FI}=\F_4/\Sp(3)\cdot\Sp(1)$. In \cite{IT}, the integral cohomology of $\F_4/\Sp(3)\cdot S^1$ is determined, and in particular, we have
		\[
		H^*(\F_4/\Sp(3)\cdot S^1;\Q)=\Q[x_2,x_8]/(\rho_{16},\rho_{24}),\quad|x_i|=|\rho_i|=i
		\]
		where $\rho_{16}$ includes the term $x_8^2$. Then by Lemmas \ref{rational Whitehead product} and \ref{formal minimal model}, there are $\alpha,\beta\in\pi_8(\F_4/\Sp(3)\cdot S^1)\otimes\Q$ such that $[\alpha,\beta]\ne 0$ in $\pi_{15}(\F_4/\Sp(3)\cdot S^1)\otimes\Q$.

		Consider the homotopy exact sequence for a fibration
		\[
		\Sp(1)/S^1=S^2\to\F_4/\Sp(3)\cdot S^1\xrightarrow{q}\mathrm{FI}.
		\]
		Then since $\pi_*(S^2)\otimes\Q=0$ for $*\ge 4$, the induced map $q_*\colon\pi_*(\mathrm{F}_4/\Sp(3)\cdot S^1)\otimes\Q\to\pi_*(\mathrm{FI})\otimes\Q$ is an isomorphism for $*\ge 5$. So there are $\tilde{\alpha},\tilde{\beta}\in\pi_8(\mathrm{FI})\otimes\Q$ such that $q_*(\tilde{\alpha})=\alpha$ and $q_*(\tilde{\beta})=\beta$, hence we get
		\[
		q_*([\tilde{\alpha},\tilde{\beta}])=[q_*(\tilde{\alpha}),q_*(\tilde{\beta})]=[\alpha,\beta].
		\]
		Thus we obtain $[\tilde{\alpha},\tilde{\beta}]\ne 0$, and by Lemma \ref{non-commutative}, the loop space of $\mathrm{FI}$ is not homotopy commutative.

		By \cite{N1,N2}, we have
		\begin{align*}
			H^*(\E_7/\Spin(12)\cdot S^1;\Q)&=\Q[x_2,x_8,x_{12}]/(\rho_{24},\rho_{28},\rho_{36})\\
			H^*(\E_8/\E_7\cdot S^1;\Q)&=\Q[y_2,y_{12},y_{20}]/(\mu_{40},\mu_{48},\mu_{60})
		\end{align*}
		where $|x_i|=|y_i|=|\rho_i|=|\mu_i|=i$, all $\rho_i,\mu_i$ are decomposable, and $\rho_{24},\mu_{40}$ include the terms $x_{12}^2,y_{20}^2$, respectively. Then quite similarly to the $\mathrm{FI}$ case, we can see that the loop spaces of $\mathrm{EVI}$ and $\mathrm{EIX}$ are not homotopy commutative, completing the proof.
	\end{proof}

	
	\section{Steenrod operations}
	
	In this section, we use the Steenrod operations to show the existence of a non-trivial Whitehead product, and apply this criterion to $\mathrm{AI}$, $\mathrm{BDI}$, $\mathrm{CII}$, $\mathrm{EI}$, $\mathrm{FII}$, $\G$. The following lemma is proved in \cite{KTT}. Let $QH^*(X)$ denote the module of indecomposables of $H^*(X)$.

	\begin{lemma}
		\label{Steenrod operation}
		Suppose that maps $\alpha\colon\Sigma A\to X$ and $\beta\colon\Sigma B\to X$ satisfy the following conditions:
		\begin{enumerate}
			\item there are $a,b\in H^*(X;\Z_p)$ such that $\alpha^*(a)\ne 0$ and $\beta^*(b)\ne 0$;
			
			\item $\beta^*(a)=0$ for $p=2$;
			
			\item $A=B$, $\alpha=\beta$, $a=b$ for $|a|=|b|$ and $p$ odd;
			
			\item $\dim QH^k(X;\Z_p)=1$ for $|a|=k$;
			
			\item there is $x\in H^*(X;\Z_p)$ and a cohomology operation $\theta$ such that $\theta(x)$ is decomposable and includes the term $ab$;
			
			\item $\theta(H^n(\Sigma A\times\Sigma B;\Z_p))=0$ for $|x|=n$.
		\end{enumerate}
		Then the Whitehead product $[\alpha,\beta]$ is non-trivial.
	\end{lemma}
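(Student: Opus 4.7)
The plan is to prove the contrapositive: supposing $[\alpha,\beta]=0$, derive a contradiction by computing the pullback of $\theta(x)$ in two different ways.

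First I would use the cofibration $\Sigma A\vee\Sigma B\hookrightarrow\Sigma A\times\Sigma B\to\Sigma(A\wedge B)$ and the fact that $[\alpha,\beta]\colon\Sigma(A\wedge B)\to X$ is the obstruction to extending $\alpha\vee\beta$ across it. Thus $[\alpha,\beta]=0$ yields a map $\gamma\colon\Sigma A\times\Sigma B\to X$ restricting to $\alpha$ on $\Sigma A$ and to $\beta$ on $\Sigma B$. Via the K\"unneth isomorphism
\[
H^*(\Sigma A\times\Sigma B;\Z_p)\cong H^*(\Sigma A;\Z_p)\otimes H^*(\Sigma B;\Z_p),
\]
the restriction property gives, for every $c\in\widetilde H^*(X;\Z_p)$,
\[
\gamma^*(c)=\alpha^*(c)\otimes 1+1\otimes\beta^*(c)+\psi(c),\qquad \psi(c)\in\widetilde H^{>0}(\Sigma A;\Z_p)\otimes\widetilde H^{>0}(\Sigma B;\Z_p).
\]
Because reduced cup products on a suspension vanish, expanding term by term shows that $\gamma^*$ annihilates any product of three or more positive-degree classes in $\widetilde H^*(X;\Z_p)$, while a two-fold product satisfies $\gamma^*(cd)=\pm\alpha^*(c)\otimes\beta^*(d)\pm\alpha^*(d)\otimes\beta^*(c)$.

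Next I would compute $\gamma^*(\theta(x))$ two ways. By naturality of the cohomology operation together with hypothesis (6),
\[
\gamma^*(\theta(x))=\theta(\gamma^*(x))\in\theta\bigl(H^n(\Sigma A\times\Sigma B;\Z_p)\bigr)=0.
\]
On the other hand, $\theta(x)$ is decomposable and contains $ab$, so writing $\theta(x)=ab+\sum_i c_id_i$ (only two-fold summands matter, since longer ones are killed by $\gamma^*$) and applying the formula from the previous step, the $ab$-summand contributes $\pm\alpha^*(a)\otimes\beta^*(b)\pm\alpha^*(b)\otimes\beta^*(a)$ in bidegree $(|a|,|b|)$. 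Hypotheses (1)--(3) make this contribution nonzero: (1) gives $\alpha^*(a),\beta^*(b)\ne 0$; (2) kills $\beta^*(a)$ when $p=2$; and (3) collapses the two summands into a nonzero square when $p$ is odd and $|a|=|b|$. Any other summand $c_id_i$ landing in bidegree $(|a|,|b|)$ has a factor of degree $|a|$, which by hypothesis (4) equals $\lambda_ia$ modulo decomposables, and the decomposable correction turns $c_id_i$ into a triple product whose $\gamma^*$-image vanishes. Hence the bidegree $(|a|,|b|)$ piece of $\gamma^*(\theta(x))$ is nonzero, contradicting the naturality computation, and so $[\alpha,\beta]\ne 0$.

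The main obstacle is the bookkeeping in the last step: making sure that no combination of other decomposable summands of $\theta(x)$ produces a cross term in bidegree $(|a|,|b|)$ that cancels the contribution of $ab$. Hypothesis (4), the vanishing of cup products on a suspension, and hypotheses (2)--(3) for the symmetric ``$\alpha^*(b)\otimes\beta^*(a)$'' term are precisely what make this bookkeeping work.
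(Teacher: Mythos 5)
The paper does not prove this lemma itself --- it is quoted from \cite{KTT} --- but your argument is exactly the standard proof given there: use $[\alpha,\beta]=0$ to extend $\alpha\vee\beta$ to a map $\gamma\colon\Sigma A\times\Sigma B\to X$, observe that since reduced cup products vanish on suspensions only the cross terms $\pm\alpha^*(c)\otimes\beta^*(d)\pm\alpha^*(d)\otimes\beta^*(c)$ survive in $\gamma^*$ of a decomposable, and contrast the resulting nonzero bidegree-$(|a|,|b|)$ component of $\gamma^*(\theta(x))$ with the vanishing forced by naturality and condition (6). Your bookkeeping via condition (4) is the right point, with the one caveat (inherited from the looseness of ``includes the term $ab$'' in the statement rather than from your argument) that one must read condition (5) so that other summands $a d_i$ with $|d_i|=|b|$ cannot make $\beta^*$ of the total degree-$|b|$ coefficient of $a$ vanish; in every application in the paper $\theta(x)$ is a single monomial, so this never arises.
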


	\subsection{$\mathrm{AI}$, $\mathrm{BDI}$, and $\mathrm{CII}$}
	
	To apply Lemma \ref{Steenrod operation} to $\mathrm{AI}$ and $\mathrm{BDI}$, we consider the map
	\[
	g\colon\R P^{n-1}\to\SO(n),\quad[x_1:\cdots:x_n]\mapsto (I-2X)\mathrm{diag}(-1,1,\ldots,1)
	\]
	defined in \cite{Wh}, where
	\[
	X=\frac{1}{x_1^2+\cdots+x_n^2}
	\begin{pmatrix}
		x_1x_1&x_1x_2&\cdots&x_1x_n\\
		x_2x_1&x_2x_2&\cdots&x_2x_n\\
		\vdots&\vdots&&\vdots\\
		x_nx_1&x_nx_2&\cdots&x_nx_n\\
	\end{pmatrix}
	\]
	and $\mathrm{diag}(a_1,\ldots,a_n)$ stands for the diagonal matrix with diagonal entries $a_1,\ldots,a_n$.
	Note that $I-2X$ is the reflection in the hyperplane through the origin associated to $(x_1,\ldots,x_n)\in\R^n$.

	\begin{lemma}
		\label{reflection}
		Let $c\colon\SO(n)\to\SU(n)$ denote the inclusion. Then the composite
		\[
		\R P^{n-1}\xrightarrow{g}\SO(n)\xrightarrow{c}\SU(n)
		\]
		is null-homotopic.
	\end{lemma}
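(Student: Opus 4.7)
The plan is to construct an explicit null-homotopy by exploiting the fact that any real reflection $I-2vv^T$ (for a unit vector $v$) can be realized at $\theta=\pi$ along the $\U(n)$-valued path $\theta\mapsto e^{i\theta vv^T}$. Since $vv^T$ is a rank-one orthogonal projection with spectrum $\{0,1\}$, functional calculus gives $e^{i\theta vv^T}=I+(e^{i\theta}-1)vv^T$, which at $\theta=\pi$ equals $I-2vv^T$. This realizes a reflection as the endpoint of a path from $I$ in $\U(n)$.

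First, I would rewrite $g$ in this language. Setting $u=(x_1,\ldots,x_n)/\|x\|$, the matrix $X$ in the definition is exactly $uu^T$, and $\mathrm{diag}(-1,1,\ldots,1)=I-2e_1e_1^T$, so
\[
g([x])=(I-2uu^T)(I-2e_1e_1^T).
\]
Then I would define $H\colon\R P^{n-1}\times[0,1]\to\U(n)$ by
\[
H_t([x])=e^{it\pi uu^T}\cdot e^{-it\pi e_1e_1^T}.
\]
By the observation above, $H_0([x])=I$ and $H_1([x])=(I-2uu^T)(I-2e_1e_1^T)=c(g([x]))$.

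Three short checks finish the argument. First, $H_t$ is well-defined on $\R P^{n-1}$ because the projection $uu^T$ is unchanged when $u$ is replaced by $-u$. Second, $H_t$ lands in $\SU(n)$: since $\mathrm{tr}(uu^T)=\mathrm{tr}(e_1e_1^T)=1$, the determinants of the two factors are $e^{it\pi}$ and $e^{-it\pi}$ respectively, so $\det H_t=1$ for all $t$; here the opposite signs in the two exponentials are chosen precisely so that the determinants cancel and the homotopy does not leave $\SU(n)$. Third, continuity is clear since $[x]\mapsto uu^T$ is continuous and the matrix exponential is continuous.

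I do not expect a real obstacle: the whole argument is an explicit formula, and the only conceptual point is the realization of a real reflection as $e^{i\pi vv^T}$, after which the two reflections composing $g([x])$ are handled by two commuting exponentials whose determinants are arranged to cancel. The reason this works in $\SU(n)$ but not in $\SO(n)$ is precisely that $\U(n)$ provides the extra parameter $\theta$ running continuously from $0$ to $\pi$, which is unavailable inside $\SO(n)$.
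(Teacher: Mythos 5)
Your proof is correct. Writing $g([x])=(I-2uu^T)(I-2e_1e_1^T)$ and using $e^{i\theta P}=I+(e^{i\theta}-1)P$ for a projection $P$ is valid; the homotopy $H_t=e^{it\pi uu^T}e^{-it\pi e_1e_1^T}$ is well defined on $\R P^{n-1}$, unitary (each exponent is skew-Hermitian), has determinant $e^{it\pi}e^{-it\pi}=1$, and runs from $I$ to $c(g([x]))$. The route is different in presentation from the paper's: there, one invokes Yokota's map $h\colon\Sigma\C P^{n-1}\to\SU(n)$ and observes that $c\circ g$ factors as $h\circ i$ where $i\colon\R P^{n-1}\to\Sigma\C P^{n-1}$ is the inclusion at suspension parameter $t=\tfrac12$, which is null-homotopic because it lands in a suspension at a single slice. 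Your argument is more elementary and self-contained (no citation of \cite{Y}, no commutative diagram), while the paper's version has the side benefit that the map $h$ it introduces is reused later as the prototypical example of a generating map; in fact, restricting $h$ to real points and unwinding the trigonometric factors shows that Yokota's formula \emph{is} your exponential homotopy up to reparametrization, so the two proofs coincide at the level of formulas. One small inaccuracy in your closing remarks: the two exponentials $e^{it\pi uu^T}$ and $e^{-it\pi e_1e_1^T}$ do \emph{not} commute in general (the projections $uu^T$ and $e_1e_1^T$ commute only when $u$ is parallel or orthogonal to $e_1$); this is harmless, since your argument never uses commutativity -- only that the product of two continuous paths of unitaries is a continuous path with the right endpoints and determinant.
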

	
	\begin{proof}
		As in \cite{Y}, we consider the map $h\colon\Sigma\C P^{n-1}\to\SU(n)$ defined by
		\begin{align*}
			&h([z_1:\cdots:z_n],t)=\\
			&\left(I-2e^{-\pi\sqrt{-1}\left(t-\frac{1}{2}\right)}\cos\pi\left(t-\frac{1}{2}\right)Z\right)\mathrm{diag}\left(-e^{2\pi\sqrt{-1}\left(t-\frac{1}{2}\right)},1,\ldots,1\right)
		\end{align*}
		where
		\[
		Z=\frac{1}{|z_1|^2+\cdots+|z_n|^2}
		\begin{pmatrix}
			z_1\overline{z_1}&z_1\overline{z_2}&\cdots&z_1\overline{z_n}\\
			z_2\overline{z_1}&z_2\overline{z_2}&\cdots&z_2\overline{z_n}\\
			\vdots&\vdots&&\vdots\\
			z_n\overline{z_1}&z_n\overline{z_2}&\cdots&z_n\overline{z_n}\\
		\end{pmatrix}.
		\]
		Then for the inclusion
		\[
		i\colon\R P^{n-1}\to\Sigma\C P^{n-1},\quad[x_1,\ldots,x_n]\mapsto\left([x_1,\ldots,x_n],\frac{1}{2}\right)
		\]
		there is a commutative diagram
		\[
		\xymatrix{
			\R P^{n-1}\ar[r]^g\ar[d]&\SO(n)\ar[d]^c\\
			\Sigma\C P^{n-1}\ar[r]^h&\SU(n).
		}
		\]
		Thus since the inclusion $i\colon\R P^{n-1}\to\Sigma\C P^{n-1}$ is null-homotopic, the composite $c\circ g$ is null-homotopic too.
	\end{proof}

	Now we are ready to apply Lemma \ref{Steenrod operation} to $\mathrm{AI}$ and $\mathrm{BDI}$.
	\begin{proposition}
		\label{AI}
		The loop space of $\mathrm{AI}=\SU(n)/\SO(n)$ for $n\ge 2$ is not homotopy commutative.
	\end{proposition}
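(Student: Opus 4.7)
The plan is to produce a nontrivial Whitehead product in $\mathrm{AI}$ at the prime $p=2$ by means of Lemma~\ref{Steenrod operation}, and then conclude via Lemma~\ref{non-commutative}. The two required maps $\alpha$ and $\beta$ will be built using Lemma~\ref{reflection} as the key geometric input.

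First I would lift Whitehead's map $g$. The fibration
\[
\Omega\mathrm{AI}\xrightarrow{\partial}\SO(n)\xrightarrow{c}\SU(n)\to\mathrm{AI}
\]
together with Lemma~\ref{reflection} ($c\circ g\simeq *$) yields a lift $\tilde g\colon\R P^{n-1}\to\Omega\mathrm{AI}$ of $g$, and I take $\alpha\colon\Sigma\R P^{n-1}\to\mathrm{AI}$ to be its adjoint. The behaviour of $\alpha^*$ on $H^*(\mathrm{AI};\Z_2)$ will be computed via the Serre spectral sequence of $\SO(n)\to\SU(n)\to\mathrm{AI}$: the generators of $H^*(\mathrm{AI};\Z_2)$ transgress, modulo decomposables, to the standard generators of $H^*(\SO(n);\Z_2)$, which are detected by $g$ via the powers of the generator of $H^1(\R P^{n-1};\Z_2)$. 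Thus $\alpha^*$ is nonzero on a family of indecomposable generators of $H^*(\mathrm{AI};\Z_2)$.

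For $\beta$ I would take a map $S^m\to\mathrm{AI}$ representing a generator of $\pi_m(\mathrm{AI})$ that is detected in $H^m(\mathrm{AI};\Z_2)$ by a class $b$ of degree different from $|a|$; then condition~(2) of Lemma~\ref{Steenrod operation}, $\beta^*(a)=0$, is automatic since $H^{|a|}(S^m;\Z_2)=0$. Using the known Steenrod action on $H^*(\mathrm{AI};\Z_2)$---in particular the Wu-type relation $\Sq^i u_i=u_i^2$ available from Borel's calculation---I would locate a class $x$ and a Steenrod square $\Sq^k$ such that $\Sq^k x$ is decomposable and contains the monomial $ab$. Condition~(4) follows from the fact that $H^*(\mathrm{AI};\Z_2)$ has at most one indecomposable generator in each degree, and condition~(6) follows from the Cartan formula applied to $H^*(\Sigma\R P^{n-1}\times S^m;\Z_2)$, where the relevant Steenrod squares vanish by direct computation on $H^*(\R P^{n-1};\Z_2)$ and $H^*(S^m;\Z_2)$. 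Then Lemma~\ref{Steenrod operation} yields $[\alpha,\beta]\neq 0$, and Lemma~\ref{non-commutative} completes the proof.

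The hard part is the cohomological bookkeeping needed to pin down suitable $x,a,b,k$ uniformly in $n$: since $H^*(\mathrm{AI};\Z_2)$ and its Steenrod action depend delicately on $n$, a short case analysis is likely required, and the very small cases (notably $n=2$, where $\mathrm{AI}\cong S^2$ and the non-homotopy-commutativity of $\Omega S^2$ follows classically from the Whitehead square $[\iota_2,\iota_2]\neq 0$) may need to be handled separately. One must also verify that the coefficient of the monomial $ab$ in the decomposable expansion of $\Sq^k x$ is nonzero mod $2$.
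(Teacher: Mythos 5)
Your overall architecture is the paper's: handle $n=2$ separately via $[1_{S^2},1_{S^2}]\ne 0$, lift Whitehead's map using Lemma~\ref{reflection} to obtain $\tilde g\colon\Sigma\R P^{n-1}\to\mathrm{AI}$ with $\tilde g^*(v_i)=\Sigma u^{i-1}$ (the paper runs the lift through the fibration $\SU(n)/\SO(n)\to B\SO(n)\to B\SU(n)$ rather than through $\Omega\mathrm{AI}\to\SO(n)\to\SU(n)$, but these are adjoint formulations of the same step), and then feed a Wu-formula relation into Lemma~\ref{Steenrod operation}. However, the part you defer as ``bookkeeping'' is where the actual content lies, and your specific plan for $\beta$ has a genuine gap: you insist that $\beta$ be a map from a sphere $S^m$ detecting a generator $b\in H^m(\mathrm{AI};\Z_2)$. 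The usable relations on the top generator are $\Sq^2 v_n=v_2v_n$ and $\Sq^3 v_n=v_3v_n$, so $b$ must be $v_2$ or $v_3$; your suggested relation $\Sq^{|a|}a=a^2$ is not usable, since it forces $a=b$ and hence violates condition (2) of Lemma~\ref{Steenrod operation} at $p=2$. Taking $\beta\colon S^2\to\mathrm{AI}$ the bottom cell and $\theta=\Sq^2$, condition (6) requires $\Sq^2(\Sigma u^{n-3}\otimes s)=\binom{n-3}{2}\,\Sigma u^{n-1}\otimes s$ to vanish, which happens precisely when $n\equiv 0,3\pmod 4$; this is exactly the paper's case (1), since $\Sigma\R P^1=S^2$. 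For $n\equiv 1,2\pmod 4$ this fails, and the needed substitute $v_3$ is \emph{not} spherical: the Wu formula gives $v_3=\Sq^1 v_2$, so any $f\colon S^3\to\mathrm{AI}$ satisfies $f^*(v_3)=\Sq^1 f^*(v_2)=0$ because $H^2(S^3;\Z_2)=0$. Hence no sphere can serve as $\beta$ for roughly half the values of $n$.

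The paper's resolution is to drop the requirement that $\beta$ be a sphere: for $n\equiv 1,2\pmod 4$ it takes $\beta=\tilde g|_{\Sigma\R P^2}$, which does detect $v_3$ via $\Sigma u^2$, and uses $\Sq^3 v_n=v_3v_n$; condition (6) then holds because $u^3=0$ in $H^*(\R P^2;\Z_2)$ kills the dangerous cross terms. If you replace ``a map $S^m\to\mathrm{AI}$'' by ``the restriction of $\tilde g$ to $\Sigma\R P^1$ or $\Sigma\R P^2$ according to $n\bmod 4$'' and carry out that two-case analysis, your argument becomes the paper's proof.
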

	
	\begin{proof}
		Since $\SU(2)/\SO(2)=S^2$ and $[1_{S^2},1_{S^2}]\ne 0$, it follows from Lemma \ref{non-commutative} that the loop space of $\SU(2)/\SO(2)$ is not homotopy commutative. Then we assume $n\ge 3$. Let $\iota\colon\SU(n)/\SO(n)\to B\SO(n)$ denote the natural map. By \cite[Theorem 6.7]{MT}, the mod $2$ cohomology of $\SU(n)/\SO(n)$ is given by
		\[
		H^*(SU(n)/SO(n);\Z_2)=\Lambda(v_2,v_3,\ldots,v_n),\quad v_i=\iota^*(w_i)
		\]
		where $w_i\in H^i(B\SO(n);\Z_2)$ denotes the $i$-th Stiefel-Whitney class. Let $\bar{g}\colon\Sigma\R P^{n-1}\to BSO(n)$ denote the adjoint of the map $g\colon\R P^{n-1}\to\SO(n)$. Then by Lemma \ref{reflection}, the composite
		\[
		\Sigma\R P^{n-1}\xrightarrow{\bar{g}}B\SO(n)\xrightarrow{c}B\SU(n)
		\]
		is null-homotopic, so the map $\bar{g}$ lifts to a map $\tilde{g}\colon\Sigma\R P^{n-1}\to\SU(n)/\SO(n)$ through $\iota$, up to homotopy, because there is a homotopy fibration
		\[
		\SU(n)/\SO(n)\xrightarrow{\iota}B\SO(n)\xrightarrow{c}B\SU(n).
		\]
		By \cite{Wh}, we have $\bar{g}^*(w_i)=\Sigma u^{i-1}$ for $i=2,\ldots,n$, where $u$ is a generator of $H^1(\R P^{n-1};\Z_2)\cong\Z_2$. Then we get
		\[
		\tilde{g}^*(v_i)=\Sigma u^{i-1}.
		\]

		\noindent(1) Suppose $n\equiv 0,3\mod 4$. Let $h=\tilde{g}\vert_{\Sigma\R P^1}$. Clearly, we have $h^*(v_n)=0$ and $h^*(v_2)=\Sigma u$. By the Wu formula, we have $\Sq^2w_n=w_2w_n$, implying
		\[
		\Sq^2v_n=v_2v_n.
		\]
		On the other hand, we have $\Sq^2(\Sigma u\otimes\Sigma u^{n-3})=0$, implying $\Sq^2(H^n(\Sigma\R P^1\times\Sigma\R P^{n-1};\Z_2))=0$. Thus by Lemma \ref{Steenrod operation}, we get $[h,\tilde{g}]\ne 0$.

		\noindent(2) Suppose $n\equiv 1,2\mod 4$. Let $h=\tilde{g}\vert_{\Sigma\R P^2}$. Then we have $h^*(v_n)=0$ and $h^*(v_3)=\Sigma u^2$. By the Wu formula, we have $\Sq^3w_n=w_3w_n$, implying
		\[
		\Sq^3v_n=v_3v_n.
		\]
		On the other hand, we have $\Sq^3(\Sigma u^2\otimes\Sigma u^{n-4})=0$ and $\Sq^3(\Sigma u\otimes\Sigma u^{n-5})=0$, implying $\Sq^3(H^n(\Sigma\R P^2\times\Sigma\R P^{n-1};\Z_2))=0$. Thus by Lemma \ref{Steenrod operation}, the Whitehead product $[h,\tilde{g}]$ is non-trivial.

		By (1) and (2) together with Lemma \ref{non-commutative}, we obtain that the loop space of $\SU(n)/\SO(n)$ for $n\ge 3$ is not homotopy commutative, completing the proof.
	\end{proof}

	\begin{proposition}
		\label{BDI}
		The loop spaces of $\mathrm{BDI}=\SO(m+n)/\SO(m)\times\SO(n)$ for $m,n\ge 2$ is not homotopy commutative.
	\end{proposition}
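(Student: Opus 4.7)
The plan is to follow the strategy of Proposition \ref{AI} and apply the Steenrod operation criterion (Lemma \ref{Steenrod operation}) to $\mathrm{BDI}=\SO(m+n)/\SO(m)\times\SO(n)$. When $\min(m,n)=2$, the space $\mathrm{BDI}$ is an irreducible Hermitian symmetric space and the result is established in \cite{KTT}, so I may assume $m,n\ge 3$.

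Under this assumption, the mod $2$ cohomology $H^*(\mathrm{BDI};\Z_2)$ is generated by the Stiefel--Whitney classes $w_2,\ldots,w_m$ of the tautological $m$-plane bundle $\gamma^m$ on the oriented Grassmannian. Since $\gamma^m$ is oriented we have $w_1(\gamma^m)=0$, and since $\gamma^m$ has rank $m$ we have $w_j(\gamma^m)=0$ for $j>m$. The Wu formula then yields the key identities
\[
\Sq^2 w_m=w_2w_m\quad\text{and}\quad\Sq^3 w_m=w_3w_m
\]
in $H^*(\mathrm{BDI};\Z_2)$, and $w_m$ is the unique indecomposable in degree $m$, so $\dim QH^m(\mathrm{BDI};\Z_2)=1$.

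I would construct a map $\tilde{g}\colon\Sigma\R P^{m-1}\to\mathrm{BDI}$ analogous to the one in Proposition \ref{AI}, satisfying $\tilde{g}^*(w_i)=\Sigma u^{i-1}$ for $2\le i\le m$. Composing the Whitehead map $g\colon\R P^{m-1}\to\SO(m)$ of \cite{Wh} into the first factor and the restricted Whitehead map $\R P^{m-1}\hookrightarrow\R P^{n-1}\xrightarrow{g}\SO(n)$ into the second yields a map $\R P^{m-1}\to\SO(m)\times\SO(n)\to\SO(m+n)$ whose image in $\pi_1(\SO(m+n))=\Z_2$ vanishes, since both generators contribute and sum to zero. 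After verifying that the higher obstructions to null-homotopy also vanish -- possibly via a case-split on $m,n$ or an auxiliary complexification argument in the spirit of Lemma \ref{reflection} applied to each factor -- the adjoint lifts through $\iota\colon\mathrm{BDI}\to B(\SO(m)\times\SO(n))$ in the fibration $\mathrm{BDI}\to B(\SO(m)\times\SO(n))\to B\SO(m+n)$ to give $\tilde{g}$, and its cohomological behavior follows by naturality.

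With $\tilde{g}$ in hand, the argument concludes exactly as in Proposition \ref{AI}. Depending on the residue of $m$ modulo $4$, one takes $h=\tilde{g}|_{\Sigma\R P^1}$ and applies $\Sq^2$ (for $m\equiv 0,3\bmod 4$) or $h=\tilde{g}|_{\Sigma\R P^2}$ and applies $\Sq^3$ (for $m\equiv 1,2\bmod 4$); the vanishing $\Sq^k(H^m(\Sigma\R P^r\times\Sigma\R P^{m-1};\Z_2))=0$ is checked by the standard formula $\Sq^j(\Sigma u^\ell)=\binom{\ell}{j}\Sigma u^{\ell+j}$, and Lemma \ref{Steenrod operation} yields $[h,\tilde{g}]\ne 0$; Lemma \ref{non-commutative} then completes the proof. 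The main obstacle is producing the lift $\tilde{g}$: whereas in the AI case Lemma \ref{reflection} kills the composite through $\SU(m+n)$ outright, here the target is $\SO(m+n)$ and one must instead engineer a two-factor cancellation, plus control the higher obstructions, which is where I expect the technical work to concentrate.
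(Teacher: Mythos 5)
Your plan leaves the crucial step unproved, and the specific construction you sketch for it does not work. You propose to build $\tilde{g}\colon\Sigma\R P^{m-1}\to\mathrm{BDI}$ by lifting the map $\R P^{m-1}\to\SO(m)\times\SO(n)\to\SO(m+n)$ whose factors are the Whitehead reflection map $g$ and its restriction; for the lift through $\iota\colon\mathrm{BDI}\to B(\SO(m)\times\SO(n))$ to exist, this composite into $\SO(m+n)$ must be null-homotopic. But if $m>n$ it is not even trivial on mod $2$ cohomology: the generators of $H^*(\SO(m+n);\Z_2)$ in degrees $i$ with $n-1<i\le m-1$ pull back to $u^i\ne 0$, since only the first factor contributes there. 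So the ``two-factor cancellation'' can only possibly work for $m=n$, and even then killing the $\pi_1$-obstruction says nothing about the higher obstructions, which you explicitly defer. This is exactly where the content of the proof lies, and it is missing.

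The paper avoids the lifting problem entirely by turning it around. Assuming $m\ge n\ge 3$ (the $n=2$ case being Hermitian and covered by \cite{KTT}), it works with the \emph{smaller} index: it takes the single map $\bar{g}\colon\Sigma\R P^{n-1}\to B\SO(n)$ from Proposition \ref{AI} and applies Lemma \ref{Steenrod operation} with $X=B\SO(n)$ itself, where the Wu formulas $\Sq^2w_n=w_2w_n$ and $\Sq^3w_n=w_3w_n$ hold in the polynomial ring $H^*(B\SO(n);\Z_2)$ with no interference from relations; this gives $[\bar{g},\bar{g}]\ne 0$ in $B\SO(n)$. Since $m\ge n$, the natural map $\iota\colon\mathrm{BDI}\to B\SO(n)$ is an $n$-equivalence, and $\Sigma\R P^{n-1}$ is $n$-dimensional, so $\bar{g}$ lifts to $\tilde{g}\colon\Sigma\R P^{n-1}\to\mathrm{BDI}$ with no obstruction computation at all; then $\iota_*([\tilde{g},\tilde{g}])=[\bar{g},\bar{g}]\ne 0$ forces $[\tilde{g},\tilde{g}]\ne 0$, and Lemma \ref{non-commutative} finishes. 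Note the direction of detection: the nontrivial Whitehead product is verified \emph{downstairs} in $B\SO(n)$ and pushed forward, rather than being verified upstairs in $H^*(\mathrm{BDI};\Z_2)$, which also spares you from justifying $\dim QH^m(\mathrm{BDI};\Z_2)=1$ and from disentangling the relations in $H^{m+2}(\mathrm{BDI};\Z_2)$ when identifying the ``term'' $w_2w_m$. If you want to salvage your write-up, replace the construction of $\tilde{g}$ by this lift through the $n$-equivalence to $B\SO(n)$.
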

	
	\begin{proof}
		We may assume $m\ge n$. The case $n=2$ is proved in \cite{KTT}, so we also assume $n\ge 3$. Consider the map $\bar{g}\colon\Sigma\R P^{n-1}\to B\SO(n)$ in the proof of Proposition \ref{AI}. Quite similarly to the proof of Proposition \ref{AI}, we can show that $[\bar{g},\bar{g}\vert_{\Sigma\R P^1}]\ne 0$ for $n\equiv 0,3\mod 4$ and $[\bar{g},\bar{g}\vert_{\Sigma\R P^2}]\ne 0$ for $n\equiv 1,2\mod 4$. Thus we obtain $[\bar{g},\bar{g}]\ne 0$.

		Since the the natural map $\iota\colon\SO(m+n)/\SO(m)\times\SO(n)\to B\SO(n)$ is an $n$-equivalence, the map $\bar{g}$ lifts to a map $\tilde{g}\colon\Sigma\R P^{n-1}\to\SO(m+n)/\SO(m)\times\SO(n)$ through $\iota$, up to homotopy. Then since $\iota_*([\tilde{g},\tilde{g}])=[\bar{g},\bar{g}]\ne 0$, we get $[\tilde{g},\tilde{g}]\ne 0$. Thus by Lemma \ref{non-commutative}, the loop space of $\SO(m+n)/\SO(m)\times\SO(n)$ for $m\ge n\ge 3$ is not homotopy commutative, completing the proof.
	\end{proof}

	It remains to consider $\mathrm{CII}$.
	\begin{proposition}
		\label{CII}
		The loop space of $\mathrm{CII}=\Sp(m+n)/\Sp(m)\times\Sp(n)$ for $m,n\ge 1$ is not homotopy commutative.
	\end{proposition}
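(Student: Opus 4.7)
We may assume $m\ge n$ by the symmetry $\mathrm{CII}\cong\Sp(m+n)/\Sp(n)\times\Sp(m)$. The strategy adapts Propositions \ref{AI} and \ref{BDI}, replacing Stiefel--Whitney classes by the symplectic Pontryagin classes $q_i\in H^{4i}(B\Sp(n);\Z)$ and Steenrod squares by the mod-$3$ Steenrod power $P^1$. The base case $m=n=1$ gives $\mathrm{CII}=S^4$, for which the Whitehead square $[1_{S^4},1_{S^4}]\in\pi_7(S^4)$ is non-trivial (its Hopf invariant is $\pm 2$), so Lemma \ref{non-commutative} applies, as for $\SU(2)/\SO(2)$ in Proposition \ref{AI}.

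When $n=1$ and $m\ge 2$, $\mathrm{CII}=\H P^m$. Pulling back from $\H P^\infty=B\Sp(1)$ gives $P^1 v=2v^2$ in $H^*(\H P^m;\Z_3)$, where $v$ generates $H^4$; for $m\ge 2$ this is a non-zero decomposable class containing the term $v\cdot v$. Applying Lemma \ref{Steenrod operation} with $\alpha=\beta=\iota_4\colon S^4\to\H P^m$ the bottom-cell inclusion, $a=b=v$, $x=v$, $\theta=P^1$, and $p=3$: condition 4 follows from $QH^4(\H P^m;\Z_3)=\Z_3\langle v\rangle$, and condition 6 holds because $H^8(S^4;\Z_3)=0$ forces $P^1$ to vanish on $H^4(S^4\times S^4;\Z_3)$. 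Hence $[\iota_4,\iota_4]\ne 0$.

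When $m\ge n\ge 2$, the natural map $\iota\colon\mathrm{CII}\to B\Sp(n)$ has homotopy fiber $\Sp(m+n)/\Sp(m)$, which is $(4m+2)$-connected, so $\iota$ is a $(4m+3)$-equivalence. By Bott periodicity, $\pi_8(B\Sp(n))\cong\pi_7(\Sp(n))\cong\Z$ with generator detected by $q_2$, giving a map $\beta_0\colon S^8\to B\Sp(n)$ with $\beta_0^*(q_2)\ne 0$; since $\dim S^8=8<4m+3$, $\beta_0$ lifts through $\iota$ to a map $\tilde\beta\colon S^8\to\mathrm{CII}$, up to homotopy. A Wu-type calculation via a maximal torus, using $P^1 x_i^2=2x_i^4$ together with the Newton identity $p_3=e_1p_2-e_2p_1+3e_3$, yields $P^1 q_2=2q_1q_2$ in $H^*(B\Sp(n);\Z_3)$, which is decomposable and contains the cross term $q_1q_2$. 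Now apply Lemma \ref{Steenrod operation} with $\alpha=\iota_4\colon S^4\to\mathrm{CII}$ the bottom cell, $\beta=\tilde\beta$, $a=q_1$, $b=q_2$, $x=q_2$, $\theta=P^1$, and $p=3$: condition 3 is vacuous since $|a|\ne|b|$; condition 4 holds because the degree-$4$ relation $p_1+q_1=0$ gives $QH^4(\mathrm{CII};\Z_3)=\Z_3\langle q_1\rangle$; and condition 6 holds because $H^8(S^4\times S^8;\Z_3)=\Z_3\langle 1\otimes u_8\rangle$ while $P^1 u_8\in H^{12}(S^8;\Z_3)=0$. Thus $[\iota_4,\tilde\beta]\ne 0$, and Lemma \ref{non-commutative} finishes the proof.

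The main technical step is the computation $P^1 q_2\equiv 2q_1q_2\pmod 3$; the case split reflects the fact that $v^2$ vanishes on $S^4$ and that the class $q_2$ exists only for $n\ge 2$.
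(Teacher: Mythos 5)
Your base case $m=n=1$ and the case $n=1$, $m\ge 2$ (where $\mathrm{CII}=\H P^m$ and $\mathcal{P}^1v=2v^2$ detects $[\iota_4,\iota_4]$ at the prime $3$) are correct. The main case $m\ge n\ge 2$, however, has a fatal gap at the point where you assert that a generator $\beta_0\colon S^8\to B\Sp(n)$ of $\pi_8(B\Sp(n))\cong\Z$ is ``detected by $q_2$''. By Bott's integrality theorem, $c_4$ evaluates to $\pm 3!=\pm 6$ on a generator of $\pi_8(BU)$, and since $q_2=\pm c^*(c_4)$, the class $q_2$ evaluates to $\pm 6$ or $\pm 12$ on a generator of $\pi_8(B\Sp(n))$. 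Hence $\beta_0^*(q_2)=0$ in $H^8(S^8;\Z_3)$ (indeed also in $H^8(S^8;\Z_2)$), so condition (1) of Lemma \ref{Steenrod operation}, namely $\beta^*(b)\ne 0$ with $b=q_2$, fails for your $\tilde\beta$; and it cannot be repaired by another choice of sphere, since every map $S^8\to B\Sp(n)$ pulls $q_2$ back to a multiple of $6$. (Your Wu-type computation $\mathcal{P}^1q_2\equiv 2q_1q_2\pmod 3$ is correct; the problem is purely one of detection.)

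This is precisely the difficulty the paper's proof is built to avoid: it replaces the sphere by the suspension $\Sigma Q_n$ of James's quaternionic quasi-projective space, whose natural map $\bar g\colon\Sigma Q_n\to B\Sp(n)$ satisfies $\bar g^*(q_i)=\Sigma x_i$ with $\Sigma x_i$ a generator, so each $q_i$ is detected with coefficient $1$. Note also that merely substituting $\Sigma Q_2$ for $S^8$ in your argument would not suffice: condition (6) then fails, because $\mathcal{P}^1(u_4\otimes\Sigma x_1)=\pm u_4\otimes\bar g^*(\mathcal{P}^1q_1)=\pm 2\,u_4\otimes\Sigma x_2\ne 0$ in $H^{12}(S^4\times\Sigma Q_2;\Z_3)$. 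The paper resolves this tension by applying the operation to the top class $q_n$ at a prime $p$ dividing $n$ (giving $\mathcal{P}^1q_n=(-1)^{\frac{p-1}{2}}q_{\frac{p-1}{2}}q_n$ with $\mathcal{P}^1q_{n-\frac{p-1}{2}}=0$), and by using $\Sq^4q_n=q_1q_n$ when $n$ is a power of $2$, for which both conditions (5) and (6) can be verified; you would need to rework your third case along these lines.
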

	
	\begin{proof}
		Recall that the cohomology of $B\Sp(n)$ is given by
		\[
		H^*(B\Sp(n))=\Z[q_1,\ldots,q_n]
		\]
		where $q_i$ denotes the $i$-the symplectic Pontrjagin class. Let $Q_n$ denote the quaternionic quasi-projective space in the sense of James \cite{J}. Then we have
		\[
		H^*(Q_n)=\langle x_1,\ldots,x_n\rangle
		\]
		such that the natural map $g\colon Q_n\to\Sp(n)$ satisfies $g^*(\sigma(q_i))=x_i$, where $\sigma$ denotes the cohomology suspension. Let $\bar{g}\colon\Sigma Q_n\to B\Sp(n)$ be the adjoint of the map $g$. Then we get
		\begin{equation}
			\label{q_i}
			\bar{g}^*(q_i)=\Sigma x_i.
		\end{equation}
		We aim to show $[\bar{g},\bar{g}]\ne 0$.

		\noindent(1) Suppose that $n$ is divisible by an odd prime $p$. Since the natural map $c\colon B\Sp(n)\to B\SU(n)$ satisfies $c^*(c_{2i})=(-1)^iq_i$, the mod $p$ Wu formula in \cite{Sh} shows
		\[
		\mathcal{P}^1q_n=(-1)^\frac{p-1}{2}q_{\frac{p-1}{2}}q_n
		\]
		in mod $p$ cohomology. We also have that $\mathcal{P}^1q_{n-\frac{p-1}{2}}=0$, implying $\mathcal{P}^1(H^{4n}(\Sigma Q_{\frac{p-1}{2}}\times\Sigma Q_n;\Z_p))=0$ by \eqref{q_i}. Then by Lemma \ref{Steenrod operation}, we obtain $[\bar{g}\vert_{\Sigma Q_{\frac{p-1}{2}}},\bar{g}]\ne 0$, implying $[\bar{g},\bar{g}]\ne 0$.

		\noindent(2) Suppose that $n$ is a power of $2$. Quite similarly to the above case, we have
		\[
		\Sq^4q_n=q_1q_n
		\]
		in mod $2$ cohomology. We also have that $\Sq^4q_{n-1}$ is decomposable, implying $\Sq^4(H^{4n}(\Sigma Q_1\times\Sigma Q_n;\Z_2))=0$ by \eqref{q_i}, where $Q_1=S^3$. Then by Lemma \ref{Steenrod operation}, we obtain $[\bar{g}\vert_{\Sigma Q_1},\bar{g}]\ne 0$, implying $[\bar{g},\bar{g}]\ne 0$.

		For $\mathrm{CII}=\Sp(m+n)/\Sp(m)\times\Sp(n)$, we may assume $m\ge n$. Then the natural map $\iota\colon\mathrm{CII}\to B\Sp(n)$ is an $(4n+2)$-equivalence, implying that the map $\bar{g}$ lifts to a map $\tilde{g}\colon\Sigma Q_n\to\mathrm{CII}$ through $\iota$, up to homotopy. Since
		\[
		\iota_*([\tilde{g},\tilde{g}])=[\iota_*(\tilde{g}),\iota_*(\tilde{g})]=[\bar{g},\bar{g}]\ne 0,
		\]
		we get $[\tilde{g},\tilde{g}]\ne 0$. Thus by Lemma \ref{non-commutative}, the loop space of $\mathrm{CII}$ is not homotopy commutative, completing the proof.
	\end{proof}

	\begin{remark}
		We may prove Proposition \ref{CII} by using the result of Bott \cite{B} through the natural map $\mathrm{CII}\to B\Sp(n)$.
	\end{remark}

	\subsection{$\mathrm{EI}$, $\mathrm{FII}$ and $\G$}
	
	We consider the symmetric spaces $\mathrm{EI}$, $\mathrm{FII}$ and $\G$.

	\begin{proposition}
		\label{EI}
		The loop space of $\mathrm{EI}$ is not homotopy commutative.
	\end{proposition}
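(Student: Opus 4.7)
The plan is to apply Lemma \ref{Steenrod operation} to $\mathrm{EI}=\E_6/\PSp(4)$ at the prime $p=3$, in close analogy with the proof of Proposition \ref{CII}. Since the kernel of $\Sp(4)\to\PSp(4)$ is $2$-torsion and $B\E_6$ is $3$-torsion free in the relevant range, one has $H^*(B\PSp(4);\Z_3)=\Z_3[q_1,q_2,q_3,q_4]$ with $|q_i|=4i$, and Borel's quotient formula
\[
H^*(\mathrm{EI};\Z_3)\cong H^*(BT;\Z_3)^{W(\PSp(4))}\big/(\widetilde{H}^*(BT;\Z_3)^{W(\E_6)})
\]
gives a polynomial-modulo-decomposable-relations presentation of $H^*(\mathrm{EI};\Z_3)$ in terms of the images of the $q_i$. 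I would extract from the literature (e.g.\ the computations of Nishimura or Ishitoya cited in the paper) the explicit generators and relations.

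Next, I would apply Shay's mod $3$ Wu formula \cite{Sh} to the symplectic Pontrjagin classes, exactly as in Proposition \ref{CII}: the operation $\mathcal{P}^1$ acting on a top-degree generator $q_n$ produces an expression of the shape $\pm q_1q_n$ modulo further decomposables, which projects to a relation of the form $\mathcal{P}^1 x=ab+(\text{other decomposables})$ in $H^*(\mathrm{EI};\Z_3)$. The adjoint $\bar g\colon\Sigma Q_4\to B\Sp(4)\to B\PSp(4)$ of the James quaternionic quasi-projective map, restricted to the appropriate subcomplex, then provides candidate test maps detecting the indecomposables $a$ and $b$ separately.

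Finally, I would lift $\bar g$ through the natural map $\iota\colon\mathrm{EI}\to B\PSp(4)$ and verify conditions (1)--(6) of Lemma \ref{Steenrod operation} to conclude $[\tilde g,\tilde g]\ne 0$, after which Lemma \ref{non-commutative} finishes the proof. The main obstacle is this lifting step: the homotopy fibre of $\iota$ is $\E_6$, which is only $2$-connected, so the plain connectivity argument used in Propositions \ref{BDI} and \ref{CII} cannot lift maps of dimension greater than $3$. I expect to circumvent this either by working $3$-locally, where the relevant $3$-primary obstructions to lifting vanish in low enough degrees, or by building the test maps directly from a subgroup embedding $\Sp(n)\hookrightarrow\Sp(4)\subset\E_6$ into the total space of the principal $\PSp(4)$-bundle $\E_6\to\mathrm{EI}$, so that $\tilde g$ is obtained without appeal to a global lifting argument against $B\PSp(4)$. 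Once the test maps are in place, verifying the remaining hypotheses of Lemma \ref{Steenrod operation} is a routine dimension count on $\Sigma Q_n$ together with the explicit formula $\bar g^*(q_i)=\Sigma x_i$ from \eqref{q_i}.
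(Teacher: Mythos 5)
Your strategy has two gaps that I believe are fatal, and both are exactly what the paper's choice of the prime $5$ is designed to avoid. First, the cohomological input: the Borel quotient formula $H^*(G/H;k)\cong H^*(BT;k)^{W(H)}/(\widetilde{H}^*(BT;k)^{W(G)})$ requires $G$ and $H$ to have no $p$-torsion, and $\E_6$ \emph{does} have $3$-torsion (e.g.\ $H^*(\E_6;\Z_3)$ contains a truncated polynomial generator in degree $8$), so your premise that ``$B\E_6$ is $3$-torsion free in the relevant range'' is false and the presentation of $H^*(\mathrm{EI};\Z_3)$ you plan to extract this way is unfounded. Second, and more seriously, the test maps cannot be produced as you propose: lifting $\bar g\colon\Sigma Q_4\to B\PSp(4)$ through $\iota$ requires the composite $\Sigma Q_4\to B\PSp(4)\to B\E_6$ to be null, but already on the bottom cell $S^4$ this composite detects the degree-$4$ rational generator of $H^*(B\E_6)$ (the Dynkin index of $\PSp(4)\subset\E_6$ is nonzero), so no lift exists at any prime, and working $3$-locally does not help. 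The alternative of factoring through $\Sp(4)\to\PSp(4)\subset\E_6\to\mathrm{EI}$ produces the constant map, since the fibre inclusion followed by the bundle projection is constant. Relatedly, the decomposable term $q_1q_4$ you want to exploit dies in $\mathrm{EI}$: since $H^4(\mathrm{EI};\Q)=0$, the class $\iota^*(q_1)$ vanishes, so the relation $\mathcal{P}^1q_4=\pm q_1q_4+\cdots$ becomes vacuous after restriction and condition (5) of Lemma \ref{Steenrod operation} cannot be met this way.

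The paper's proof sidesteps all of this by working at $p=5$, where $\E_6$ and $\PSp(4)$ are torsion free and Ishitoya's computation gives $H^*(\mathrm{EI};\Z_5)=\Z_5[x_8]/(x_8^3)\otimes\Lambda(x_9,x_{17})$. The Serre spectral sequence of $\mathrm{EI}\xrightarrow{\iota}B\PSp(4)\to B\E_6$ shows $\iota^*(q_2)=x_8$ for degree reasons, whence $\mathcal{P}^1x_8=x_8^2$ from $\mathcal{P}^1q_2=q_2^2$; note that here the relevant decomposable is $q_2^2$, whose restriction $x_8^2$ survives, unlike $q_1q_4$. The test map is then simply a map $g\colon S^8\to\mathrm{EI}_{(5)}$ detecting the bottom class $x_8$ (which exists by the mod-$\mathcal{C}$ Hurewicz theorem, with no lifting against $B\PSp(4)$ required), and Lemma \ref{Steenrod operation} applies with $a=b=x_8$ and $\theta=\mathcal{P}^1$ since $\mathcal{P}^1(H^*(S^8\times S^8;\Z_5))=0$. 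If you want to repair your argument, you should switch to $p=5$ and replace the quasi-projective test maps by the bottom-cell sphere.
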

	
	\begin{proof}
		By \cite{I1}, the mod $5$ cohomology of $\mathrm{EI}$ is given by
		\[
		H^*(\mathrm{EI};\Z_5)=\Z_5[x_8]/(x_8^3)\otimes\Lambda(x_9,x_{17}).
		\]
		Recall that the mod $5$ cohomology of $B\mathrm{PSp}(4)$ and $B\E_6$ are given by
		\begin{alignat*}{3}
			H^*(B\mathrm{PSp}(4);\Z_5)&=\Z_5[q_1,q_2,q_3,q_4],&&\quad|q_i|=4i\\
			H^*(B\E_6;\Z_5)&=\Z_5[y_4,y_{12},y_{16},y_{20},y_{24},y_{32}],&&\quad|y_i|=i.
		\end{alignat*}
		We consider the Serre spectral sequence associated with a homotopy fibration
		\[
		\mathrm{EI}\xrightarrow{\iota}B\mathrm{PSp}(4)\to B\E_6.
		\]
		Then by degree reasons, we get $\iota^*(q_2)=x_8$. Then since $\mathcal{P}^1q_2=q_2^2$, we obtain $\mathcal{P}^1x_8=x_8^2$. Let $g\colon S^8\to\mathrm{EI}_{(5)}$ be a map detecting $x_8$, where $-_{(5)}$ stands for the $5$-localization. Then since $\mathcal{P}^1(H^*(S^8\times S^8;\Z_5))=0$, we can apply Lemma \ref{Steenrod operation} and get that the Whitehead product $[g,g]$ is non-trivial. Thus by Lemma \ref{non-commutative}, the proof is finished.
	\end{proof}

	\begin{proposition}
		\label{FII}
		The loop space of $\mathrm{FII}$ is not homotopy commutative.
	\end{proposition}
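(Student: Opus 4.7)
The plan is to follow the template of the preceding Proposition \ref{EI}, working at the prime $p = 5$. Since $\mathrm{FII} = \F_4/\Spin(9)$ is the Cayley projective plane, its mod $5$ cohomology is
\[
H^*(\mathrm{FII};\Z_5) = \Z_5[x_8]/(x_8^3).
\]
Because $\mathrm{FII}$ is $7$-connected with $H_8(\mathrm{FII};\Z) \cong \Z$, there is a map $g\colon S^8 \to \mathrm{FII}_{(5)}$ detecting $x_8$. I would apply Lemma \ref{Steenrod operation} with $\alpha = \beta = g$, $a = b = x = x_8$, and $\theta = \mathcal{P}^1$; the single non-obvious hypothesis to verify is that $\mathcal{P}^1 x_8$ is a non-zero multiple of $x_8^2$.

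To compute $\mathcal{P}^1 x_8$, I would use the homotopy fibration
\[
\mathrm{FII} \xrightarrow{\iota} B\Spin(9) \to B\F_4.
\]
Mod $5$ one has $H^*(B\Spin(9);\Z_5) = \Z_5[p_1,p_2,p_3,p_4]$ in terms of Pontrjagin classes. Degree reasons give $\iota^*(p_1) = \iota^*(p_3) = 0$, while a comparison of dimensions in the Serre spectral sequence of the fibration forces $\iota^*(p_2) = c\,x_8$ with $c \ne 0$. Applying the mod $5$ Wu formula for Chern classes via the complexification $\Spin(9) \hookrightarrow \U(9)$ (and the identification $c_{2k} \mapsto (-1)^k p_k$) yields
\[
\mathcal{P}^1 p_2 \equiv 3\,p_4 \pmod{\text{decomposables in the }p_i},
\]
since $-8 \equiv 2 \pmod 5$ appears as the leading coefficient of $c_8$ in the Newton expansion of $p_8$. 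Pulling back by $\iota^*$ kills every decomposable involving $p_1$ or $p_3$, leaving a combination of $\iota^*(p_4)$ and $\iota^*(p_2^2) = c^2 x_8^2$. The value $\iota^*(p_4)$ is then pinned down by expanding the $W(F_4)$-invariant generator $y_{16} \in H^{16}(B\F_4)$ in terms of the $p_i$ and using that its lift to $B\Spin(9)$ lies in $\ker\iota^*$; the fact that $y_{16}$ has a non-zero $p_2^2$ coefficient forces $\mathcal{P}^1 x_8 = \lambda\,x_8^2$ with $\lambda \ne 0$.

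The remaining hypotheses of Lemma \ref{Steenrod operation} are routine: (1), (3), (4), (5) are immediate from the choices above; (2) is vacuous since $p = 5$ is odd; (6) holds because $\mathcal{P}^1$ vanishes on $H^8(S^8 \times S^8;\Z_5)$ by dimension together with the Cartan formula. Hence $[g,g] \ne 0$, and Lemma \ref{non-commutative} completes the proof. The main obstacle is the non-vanishing of $\mathcal{P}^1 x_8$: one must track the decomposable tail of the Wu formula carefully to rule out a cancellation between $\iota^*(p_4)$ and $\iota^*(p_2^2)$. A cleaner alternative, if this bookkeeping proves delicate, is to invoke the published mod $5$ cohomology calculations for classifying spaces of exceptional groups (e.g.\ by Kono--Mimura or Ishitoya), which record the action of $\mathcal{P}^1$ on the relevant generators directly.
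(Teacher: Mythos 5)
Your strategy is the same as the paper's: work mod $5$, use the fibration $\mathrm{FII}\to B\Spin(9)\to B\F_4$ to identify $q=\iota^*(p_2)$ in $H^*(\mathrm{FII};\Z_5)=\Z_5[q]/(q^3)$, compute $\mathcal{P}^1q$ via the Wu formula, and feed the bottom-cell map $g\colon S^8\to\mathrm{FII}$ into Lemma \ref{Steenrod operation} and Lemma \ref{non-commutative}. The one substantive divergence is at the Wu-formula step: the paper simply quotes $\mathcal{P}^1p_2=p_2^2+2p_2p_1^2$ and pulls back, whereas you track the indecomposable tail $3p_4$ (your coefficient is right: $\mathcal{P}^1p_2=2p_1^2p_2-4p_2^2-2p_1p_3+8p_4\equiv p_2^2+2p_1^2p_2+3p_1p_3+3p_4 \bmod 5$) and correctly observe that $\iota^*(p_4)$ lives in $H^{16}(\mathrm{FII};\Z_5)\cong\Z_5\{q^2\}$ and need not vanish. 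That is a legitimate concern, and your care here is to your credit.

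However, as written you do not close this point, and the sentence ``the fact that $y_{16}$ has a non-zero $p_2^2$ coefficient forces $\lambda\ne 0$'' is not a proof. Writing $\pi^*(y_{16})=a\,p_4+b\,p_2^2+(\text{terms divisible by }p_1)$, one gets $a\,\iota^*(p_4)+b\,q^2=0$, hence $a\ne 0$ and $\iota^*(p_4)=-a^{-1}b\,q^2$, so $\mathcal{P}^1q=(1-3a^{-1}b)\,q^2$; what must be checked is $a\ne 3b$ in $\Z_5$, not merely $b\ne 0$, and this requires an actual computation of the degree-$16$ $W(\F_4)$-invariant (which is delicate: the natural ``sum over short roots'' invariant of degree $8$ in the torus variables becomes decomposable mod $5$, so one must choose the integral generator with care). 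The cleanest way to finish, avoiding the invariant theory entirely, is the classical homotopy-theoretic fact: $\O P^2=S^8\cup_\sigma e^{16}$ with $\sigma$ the octonionic Hopf map, whose stable class generates $\pi_7^s\cong\Z/240$; the assignment $f\mapsto(\mathcal{P}^1\text{ on }C_f)$ is a homomorphism $\pi_7^s\to\Z/5$ which is non-zero on $\alpha_1=48\sigma$, hence non-zero on $\sigma$, so $\mathcal{P}^1q\ne0$. With that substituted for your unresolved step, the rest of your argument (hypotheses (1)--(6) of Lemma \ref{Steenrod operation}, then Lemma \ref{non-commutative}) goes through exactly as in the paper.
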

	
	\begin{proof}
		Since $\mathrm{FII}$ is the Cayley projective plane $\O P^2$, its mod $5$ cohomology is given by
		\begin{equation}
			\label{q}
			H^*(\mathrm{FII};\Z_5)=\Z_5[q]/(q^3),\quad|q|=8.
		\end{equation}
		Consider the homotopy fibration
		\[
		\F_4/\Spin(9)\xrightarrow{\iota}B\Spin(9)\to B\F_4.
		\]
		Since mod $5$ cohomology of $B\F_4$ and $B\Spin(9)$ are given by
		\begin{align*}
			H^*(B\F_4;\Z_5)&=\Z_5[x_4,x_{12},x_{16},x_{24}]\\
			H^*(B\Spin(9);\Z_5)&=\Z_5[p_1,p_2,p_3,p_4]
		\end{align*}
		where $|x_i|=i$ and $p_i$ is the $i$-th Pontrjagin class, the standard spectral sequence argument shows $q=\iota^*(p_2)$. Then since $\mathcal{P}^1p_2=p_2^2+2p_2p_1^2$, we have $\mathcal{P}^1q=q^2$. On the other hand, we have $\mathcal{P}^1(H^*(S^8\times S^8;\Z_5))=0$. Let $g\colon S^8\to\mathrm{FII}$ denote the bottom cell inclusion. Then by Lemma \ref{Steenrod operation}, we get $[g,g]\ne 0$. Thus by Lemma \ref{non-commutative}, the loop space of $\mathrm{FII}$ is not homotopy commutative.
	\end{proof}

	

	\begin{proposition}
		\label{G}
		The loop space of $\G$ is not homotopy commutative.
	\end{proposition}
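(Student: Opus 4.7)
The plan is to apply Lemma \ref{Steenrod operation} at $p=2$, following the template of Propositions \ref{AI} and \ref{BDI} via Whitehead's reflection map and the homotopy fibration
\[
\G\xrightarrow{\iota}B\SO(4)\to B\G_2.
\]
Since $\G_2$ is $2$-connected and $\R P^2$ is $2$-dimensional, obstruction theory shows that the composite $\R P^2\xrightarrow{g}\SO(4)\to\G_2$ is null-homotopic. Adjointly, $\Sigma\R P^2\xrightarrow{\bar g}B\SO(4)\to B\G_2$ is null-homotopic, so $\bar g|_{\Sigma\R P^2}$ lifts, up to homotopy, to a map $\tilde g\colon\Sigma\R P^2\to\G$ through $\iota$.

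To identify the relevant cohomology classes I would use that
\[
H^*(B\G_2;\Z_2)=\Z_2[y_4,y_6,y_7],\qquad|y_i|=i,
\]
vanishes in positive degrees $1,2,3,5$, and that $\G$ is simply connected, so $H^1(\G;\Z_2)=0$. In the Serre spectral sequence of the fibration, this forces $E_2^{p,q}=0$ whenever $p\ge 1$ and $p+q\in\{2,3,5\}$, so the edge homomorphism $\iota^*$ is injective on $H^k(B\SO(4);\Z_2)$ for $k\in\{2,3,5\}$. Setting $v_i:=\iota^*(w_i)$ then yields $v_2\ne 0$, $v_3\ne 0$, and $v_2v_3=\iota^*(w_2w_3)\ne 0$, and the Whitehead formula $\bar g^*(w_i)=\Sigma u^{i-1}$ recalled in Proposition \ref{AI} gives $\tilde g^*(v_i)=\Sigma u^{i-1}$ for $i=2,3$.

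I would then apply Lemma \ref{Steenrod operation} with $\alpha=\tilde g$, $\beta=\tilde g|_{\Sigma\R P^1}\colon S^2\to\G$, $a=v_3$, $b=v_2$, $x=v_3$, and $\theta=\Sq^2$. The Wu formula in $B\SO(4)$ gives
\[
\Sq^2 w_3=w_2 w_3+w_1 w_4+w_5=w_2 w_3
\]
since $w_1=w_5=0$, so pulling back through $\iota^*$ yields $\Sq^2 v_3=v_2 v_3$, verifying condition (5). Conditions (1) and (4) follow from the identifications above; condition (2) is automatic because $\beta$ factors through $S^2$ while $v_3\in H^3(\G;\Z_2)$; and condition (6) reduces to $\Sq^2(\Sigma u^2)=\Sigma(\Sq^2 u^2)=\Sigma u^4=0$ in $H^*(\R P^2;\Z_2)$ by the Cartan formula. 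Therefore $[\tilde g,\tilde g|_{\Sigma\R P^1}]\ne 0$, and Lemma \ref{non-commutative} completes the proof.

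The main obstacle is the spectral sequence bookkeeping needed to propagate the Wu identity and the non-vanishing of $v_2,v_3,v_2v_3$ from $B\SO(4)$ down to $\G$; once this is in place, the argument is a direct analog of those for $\mathrm{AI}$ and $\mathrm{BDI}$.
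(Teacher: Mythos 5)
Your argument is essentially the paper's: the paper also applies Lemma \ref{Steenrod operation} with $\theta=\Sq^2$, $x=a$ the degree-$3$ generator, $b$ the degree-$2$ generator, $\alpha$ the inclusion of the $3$-skeleton $M=S^2\cup_2e^3$ of $\G$ (which is exactly your $\Sigma\R P^2$) and $\beta=\alpha\vert_{S^2}$, concluding $[\alpha\vert_{S^2},\alpha]\ne 0$. The only substantive difference is bookkeeping: the paper simply quotes Borel--Hirzebruch for $H^*(\G;\Z_2)=\Z_2[x_2,x_3]/(\cdots)$ with $\iota^*(w_i)=x_i$ and takes the skeletal inclusion as the map, whereas you build the map by lifting Whitehead's reflection map through the fibration $\G\to B\SO(4)\to B\G_2$ and recover the cohomological input from the Serre spectral sequence. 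Both routes work, and your lifting construction and the verification of conditions (1), (2), (5), (6) are fine.

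The one place you are too quick is condition (4): you need $\dim QH^3(\G;\Z_2)=\dim H^3(\G;\Z_2)=1$, and you claim this ``follows from the identifications above.'' It does not: your spectral-sequence argument shows $\iota^*$ is \emph{injective} on $H^3(B\SO(4);\Z_2)$, which only gives $\dim H^3(\G;\Z_2)\ge 1$. In the spectral sequence of $\G\to B\SO(4)\to B\G_2$ one has $E_2^{0,3}=H^3(\G;\Z_2)$ with a possible differential $d_4\colon E_4^{0,3}\to E_4^{4,0}=H^4(B\G_2;\Z_2)\cong\Z_2$, so a priori $\dim H^3(\G;\Z_2)$ could be $2$, with the extra class transgressing to $y_4$. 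To close this you must either show that $d_4$ vanishes there (equivalently, that $H^4(B\G_2;\Z_2)\to H^4(B\SO(4);\Z_2)$ is injective, which is true but needs an argument about the restriction of the generator), or simply cite the Borel--Hirzebruch computation of $H^*(\G;\Z_2)$ as the paper does. With that one line added, your proof is complete.
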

	
	\begin{proof}
		Let $\iota\colon\G\to B\SO(4)$ denote the natural map. In \cite{BH}, the mod $2$ cohomology of $\G$ is given by
		\[
		H^*(\G;\Z_2)=\Z_2[x_2,x_3]/(x_2^3+x_3^2,x_2x_3),\quad\iota^*(w_i)=x_i
		\]
		where $w_i$ is the $i$-th Stiefel-Whitney class. Then by the Wu formula, we have $\Sq^1x_2=x_3$, implying that the $3$-skeleton of $\G$ is $M=S^2\cup_2e^3$. Let $g\colon M\to\G$ denote the inclusion. Then we have $g^*(x_i)=u_i$ for $i=2,3$, where $u_i$ is a generator of $H^i(M;\Z_2)\cong\Z_2$ for $i=2,3$. Clearly, we have $\Sq^2(H^*(S^2\times M;\Z_2))=0$. By the Wu formula, we also have $\Sq^2x_3=x_2x_3$. Then by Lemma \ref{Steenrod operation}, we get that the Whitehead product $[g\vert_{S^2},g]$ is non-trivial. Thus by Lemma \ref{non-commutative}, the proof is finished.
	\end{proof}

	
	\section{Partial projective plane}
	
	In this section, we introduce a partial projective plane, and use it to prove the existence of a non-trivial Whitehead product. We apply this technique to $\mathrm{AII}$ and $\mathrm{EIV}$, to which the techniques in the previous sections do not apply.

	Let $X$ be a path-connected space. We say that a map $g\colon A\to X$ is a generating map if it induces an isomorphism
	\[
	g^*\colon QH^*(X)\xrightarrow{\cong}\widetilde{H}^*(A).
	\]
	For example, the map $h\colon\Sigma\C P^{n-1}\to\SU(n)$ defined in the proof of Proposition \ref{AI} is a generating map. A generating map is of particular importance in the study of the multiplicative structure of a localized Lie group \cite{KKTh,KKTs1,KKTs2,Th1,Th2}.

	Let $g\colon A\to X$ be a generating map such that $A=\Sigma B$. Assume that the Whitehead product $[g,g]\colon B\star B\to X$ is trivial. Then there is a homotopy commutative diagram
	\[
	\xymatrix{
		A\vee A\ar[r]^(.62){g+g}\ar[d]_{\rm incl}&X\ar@{=}[d]\\
		A\times A\ar[r]^(.62)\mu&X.
	}
	\]
	Fixing the map $\mu$, we define the partial projective plane of $X$, denoted by $\widehat{P}_2X$, by the cofiber of the Hopf construction
	\[
	H(\mu)\colon A\star A\to\Sigma X,\quad(x,y,t)\mapsto(\mu(x,y),t).
	\]
	If $X$ is an H-space and $\mu$ is the restriction of the multiplication of $X$, then $\widehat{P}_2X$ is a subspace of the projective plane of $X$. We show a condition for the existence of a non-trivial Whitehead product by using the partial projective plane. Let
	\[
	\Sq=1+\Sq^1+\Sq^2+\cdots.
	\]

	\begin{lemma}
		\label{projective plane}
		Let $X$ be a path-connected space satisfying the following conditions:
		\begin{enumerate}
			\item the mod $2$ cohomology of $X$ is generated by elements $x_1,\ldots,x_n$ of odd degree;
			
			\item $\Sq x_i$ is a linear combination of $x_1,\ldots,x_n$ for $i=1,\ldots,n$.
		\end{enumerate}
		If there is a generating map $g\colon A=\Sigma B\to X$ and $\min\{|x_1|,\ldots,|x_n|\}\ne 2^k-1$ for any $k\ge 1$, then the Whitehead product $[g,g]$ is non-trivial.
	\end{lemma}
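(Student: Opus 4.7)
The plan is to argue by contradiction: assume $[g,g]=0$, so that the fold map $g+g\colon A\vee A\to X$ extends to some $\mu\colon A\times A\to X$ and the partial projective plane $\widehat P_2 X=\Sigma X\cup_{H(\mu)}C(A\star A)$ is defined. Let $d=\min_i|x_i|$ and, after relabeling, assume $|x_1|=d$. Write $a_i=g^*(x_i)\in\widetilde H^{|x_i|}(A;\Z_2)$; this is a basis of $\widetilde H^*(A;\Z_2)$ because $g$ is generating.

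First, I will establish the existence of canonical lifts $\bar x_i\in\widetilde H^{|x_i|+1}(\widehat P_2 X;\Z_2)$ of the suspensions $\sigma x_i$ in a suitable range of degrees, via the cofiber long exact sequence of $A\star A\xrightarrow{H(\mu)}\Sigma X\xrightarrow{i}\widehat P_2 X\xrightarrow{p}\Sigma(A\star A)$. The obstruction to lifting $\sigma x_i$ lies in $\widetilde H^{|x_i|+1}(A\star A;\Z_2)\cong\sigma\widetilde H^{|x_i|}(A\wedge A;\Z_2)$, and vanishes by dimension and parity: the basis classes $a_j\otimes a_k$ of $\widetilde H^*(A\wedge A;\Z_2)$ all have degree $|x_j|+|x_k|\ge 2d$ with even total degree, so $\widetilde H^k(A\wedge A;\Z_2)=0$ for $k<2d$ and for $k$ odd.

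Second, since $d\ne 2^k-1$ for any $k$, the integer $d+1$ is not a power of $2$, and the Adem relations yield a nontrivial decomposition
\[
\Sq^{d+1}=\sum_{\substack{a+b=d+1\\ a,b\ge 1}}\alpha_{a,b}\,\Sq^a\Sq^b
\]
in the mod $2$ Steenrod algebra. Applying this to $\bar x_1$ and using hypothesis (2): each $\Sq^b x_1$ is a $\Z_2$-linear combination of the $x_j$'s of degree $d+b$, and by the uniqueness of lifts in the dimension range below $2d+2$, the corresponding $\Sq^b\bar x_1$ is the lift of $\sigma\Sq^b x_1$ expressed by the same linear combination of $\bar x_j$'s.

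Third, I will analyze $\bar x_1^2=\Sq^{d+1}\bar x_1\in\widetilde H^{2d+2}(\widehat P_2 X;\Z_2)$ in two ways. Its image under $i^*$ is $\sigma\Sq^{d+1}x_1=0$ by unstability, so $\bar x_1^2$ lies in the $p^*$-image of $\widetilde H^{2d+2}(\Sigma(A\star A);\Z_2)\cong\sigma^2\widetilde H^{2d}(A\wedge A;\Z_2)$, and unwinding the Adem decomposition identifies $\bar x_1^2$ with a specific such $p^*$-class. On the other hand, the intrinsic cup product in the mapping cone $\widehat P_2 X$, combined with the fact that $x_1^2=0$ in $H^*(X;\Z_2)$ (itself a consequence of (1) and (2): $\Sq^d x_1=x_1^2$ would be a combination of generators of even degree $2d$, while every generator is odd), forces $\bar x_1^2=0$. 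The main obstacle is the final matching step: pinning down the specific $p^*$-class produced by the Adem decomposition and showing that hypothesis (2), together with the nontriviality of the Adem coefficients $\alpha_{a,b}$, makes it nonzero, thereby contradicting the vanishing coming from the mapping cone structure and concluding $[g,g]\ne 0$.
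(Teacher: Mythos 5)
Your setup is the same as the paper's: assume $[g,g]=0$, form the partial projective plane $\widehat{P}_2X$, and lift the suspensions $\Sigma x_i$ through $\iota^*\colon H^*(\widehat{P}_2X;\Z_2)\to H^*(\Sigma X;\Z_2)$ (your parity argument for the vanishing of $H^{|x_i|+1}(A\star A;\Z_2)$ is fine, and is even a little slicker than the paper's primitivity-plus-Hopf-construction computation). But the heart of your argument contains a genuine error, and the step you yourself flag as ``the main obstacle'' is exactly the hard part. The error: you claim that $x_1^2=0$ in $H^*(X;\Z_2)$ forces $\bar x_1^2=0$ in $H^*(\widehat{P}_2X;\Z_2)$. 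This is false, and in fact backwards. The restriction $\iota^*(\bar x_1^2)=(\Sigma x_1)^2$ vanishes automatically because all cup products vanish in a suspension, irrespective of what happens in $X$; hence $\bar x_1^2$ lies in $\mathrm{Im}\,\delta$, and by Thomas's functional-cup-product formula \cite{T1} it equals $\delta(g^*(x_1)\star g^*(x_1))$. This class is nonzero: in degree $2d+1$ the image of $H(\mu)^*$ is spanned by the suspensions of the symmetric sums $g^*(x_i)\otimes g^*(x_j)+g^*(x_j)\otimes g^*(x_i)$ coming from the decomposables $x_ix_j$ of degree $2d$, and these cannot hit $g^*(x_1)\otimes g^*(x_1)$ modulo $2$, so $\delta$ is injective on it. The nonvanishing of the products $y_iy_j=\delta(g^*(x_i)\star g^*(x_j))$ is the engine of the entire argument --- it is what makes $H^*(\widehat{P}_2X;\Z_2)$ contain the truncated polynomial algebra $\Z_2[y_1,\ldots,y_n]/(y_1,\ldots,y_n)^3$ --- not something to be argued away.

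Even with the direction corrected (aiming to show that $\bar x_1^2=\Sq^{d+1}\bar x_1\ne 0$ is incompatible with the Adem decomposition of $\Sq^{d+1}$ when $d+1$ is not a power of $2$), the ``matching step'' is genuinely delicate: while your uniqueness claim for $\Sq^b\bar x_1$ in degrees below $2d+2$ is correct, the classes $\Sq^a\bar x_j$ in the top degree $2d+2$ are only determined modulo $\mathrm{Im}\,\delta$, which is precisely where $\bar x_1^2$ lives, so one must control all the products $y_iy_j$ and the degree-$(2d+2)$ generators simultaneously. That bookkeeping is the content of Thomas's theorem that a height-$3$ truncated polynomial algebra over $\Z_2$ closed under $\Sq$ has its lowest-degree generator in degree a power of $2$ \cite{T2}. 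The paper's proof consists of verifying the hypotheses of that theorem --- using the category bound $\mathrm{cat}(\widehat{P}_2X)\le 2$ to kill triple products, the formula from \cite{T1} to identify $\mathrm{Im}\,\delta$ with the span of the $y_iy_j$, and hypothesis (2) to get $\Sq(A)\subset A$ --- and then quoting it. Your proposal neither carries out this step nor cites a result that does, and the one concrete assertion it makes about the cup product structure is the opposite of the truth.
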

	
	\begin{proof}
		We assume $[g,g]=0$ and deduce a contradiction. By assumption, we have the partial projective space $\widehat{P}_2X$, where we fix a map $\mu\colon A\times A\to X$ extending the map $\alpha+\alpha\colon A\vee A\to X$, up to homotopy. By degree reasons, each $x_i\in H^*(X;\Z_2)$ is primitive in the sense that
		\[
		\mu^*(x_i)=g^*(x_i)\otimes 1+1\otimes g^*(x_i).
		\]
		Quite similarly to \cite{BT}, we can see that
		\[
		H(\mu)^*(\Sigma x)=\Sigma\mu^*(x)-\Sigma(g^*(x)\otimes 1)-\Sigma(1\otimes g^*(x))
		\]
		for any $x\in H^*(X)$, where we identify $A\star A$ with $\Sigma A\wedge A$. Then we get $H(\mu)^*(\Sigma x_i)=0$. By definition, there is an exact sequence
		\[
		\cdots\to H^{*-1}(A\star A;\Z_2)\xrightarrow{\delta}H^*(\widehat{P}_2X;\Z_2)\xrightarrow{\iota^*}H^*(\Sigma X;\Z_2)\xrightarrow{H(\mu)^*}H^*(A\star A;\Z_2)\to\cdots
		\]
		where $\iota\colon\Sigma X\to\widehat{P}_2X$ denotes the inclusion. Since $H(\mu)^*(\Sigma x_i)=0$, there is $y_i\in H^*(\widehat{P}_2X)$ such that $\iota^*(y_i)=\Sigma x_i$. So by \cite{T1}, we get
		\[
		\delta(g^*(x_i)\star g^*(x_j))=y_iy_j.
		\]
		On the other hand, since $\widehat{P}_2X$ has category $\le2$, we have $y_iy_jy_k=0$ for any $i,j,k$. Then $H^*(\widehat{P}_2X;\Z_2)$ contains the truncated polynomial ring
		\[
		A=\Z_2[y_1,y_2,\ldots,y_n]/(y_1,y_2,\ldots,y_n)^3.
		\]
		Since $\Sq x_i$ is a linear combination of $x_1,\ldots,x_n$, $\Sq y_i$ is a linear combination of $y_1,\ldots,y_n$ modulo $\mathrm{Im}\,\delta$. Then since $\mathrm{Im}\,\delta$ is a vector space spanned by $y_iy_j$ for $1\le i\le j\le n$, we get
		\[
		\Sq(A)\subset A.
		\]
		Thus by \cite{T2}, we obtain that $\min\{|y_1|,|y_2|,\ldots,|y_n|\}$ must be $2^k$ for some $k\ge 1$, completing the proof because $|y_i|=|x_i|+1$.
	\end{proof}

	Now we consider the symmetric spaces $\mathrm{AII}$ and $\mathrm{EIV}$.

	\begin{proposition}
		\label{AII}
		The loop space of $\mathrm{AII}=\SU(2n)/\Sp(n)$ for $n\ge 2$ is not homotopy commutative.
	\end{proposition}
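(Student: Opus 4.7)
The plan is to invoke Lemma \ref{projective plane}. When $n=2$ we have $\mathrm{AII}=\SU(4)/\Sp(2)=S^5$, and since $5\notin\{1,3,7\}$ the Whitehead square $[\iota_5,\iota_5]$ is non-trivial, so Lemma \ref{non-commutative} handles this case. For $n\ge 3$, we use that
\[
H^*(\mathrm{AII};\Z_2)=\Lambda(z_5,z_9,\ldots,z_{4n-3})
\]
is an exterior algebra on generators of odd degree congruent to $1$ modulo $4$, whose minimum degree $5$ is not of the form $2^k-1$; thus condition (1) of Lemma \ref{projective plane} and the degree hypothesis both hold.

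For condition (2), my plan is to argue naturally via the projection $p\colon\SU(2n)\to\mathrm{AII}$. The Serre spectral sequence of the fibration $\Sp(n)\to\SU(2n)\xrightarrow{p}\mathrm{AII}$ shows $p^*$ is an injection of algebras with $p^*(z_{4i+1})=y_{4i+1}$, where $H^*(\SU(2n);\Z_2)=\Lambda(y_3,y_5,\ldots,y_{4n-1})$. Since $\SU(2n)$ is a Lie group, its mod $2$ cohomology is a primitively generated Hopf algebra whose primitives are precisely the linear span of the $y_j$, and Steenrod squares preserve primitives; hence each $\Sq^k y_{4i+1}$ is a linear combination of the $y_j$. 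Naturality together with the facts that generators $y_{2j+1}$ with $2j+1\equiv 3\pmod 4$ lie outside $\mathrm{Im}\,p^*$, while any nonzero triple product of distinct $z$'s in $\mathrm{AII}$ maps under $p^*$ to a triple product rather than to a single generator, then forces $\Sq z_{4i+1}$ to be a linear combination of the $z_{4j+1}$, establishing condition (2).

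The main obstacle is producing a generating map $g\colon A=\Sigma B\to\mathrm{AII}$. I plan to construct it as a wedge $g=\bigvee g_i$ of spherical classes $g_i\colon S^{4i+1}\to\mathrm{AII}$: since the integral cohomology of $\mathrm{AII}$ is torsion-free with $H_{4i+1}(\mathrm{AII};\Z)\cong\Z$ generated by the dual of $z_{4i+1}$, and since an inductive analysis of the long exact sequence of $\Sp(n)\to\SU(2n)\to\mathrm{AII}$ (together with Bott's computations of the stable homotopy of $\SU$ and $\Sp$) shows that the Hurewicz map $\pi_{4i+1}(\mathrm{AII})\to H_{4i+1}(\mathrm{AII};\Z)$ surjects onto a generator in each relevant degree, we may pick $g_i$ detecting $z_{4i+1}$ mod $2$. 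Setting $A=\bigvee_{i=1}^{n-1}S^{4i+1}=\Sigma\bigvee_{i=1}^{n-1}S^{4i}$, the resulting map $g\colon A\to\mathrm{AII}$ is a generating map, so Lemma \ref{projective plane} yields $[g,g]\ne 0$, and Lemma \ref{non-commutative} completes the proof.
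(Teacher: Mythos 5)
Your overall strategy (invoke Lemma \ref{projective plane} with $\min\{|x_i|\}=5\ne 2^k-1$, then Lemma \ref{non-commutative}) is the same as the paper's, and your verifications of conditions (1) and (2) are essentially sound: the paper obtains condition (2) from \cite[Theorem 6.7]{MT} together with the Wu formula rather than from primitivity in $H^*(\SU(2n);\Z_2)$, but that difference is cosmetic. The separate treatment of $n=2$ via $\SU(4)/\Sp(2)=S^5$ is correct but unnecessary, since a generating map is available for all $n\ge 2$.

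The genuine gap is your construction of the generating map as a wedge of spheres $\bigvee_{i=1}^{n-1}S^{4i+1}$: for $n\ge 3$ no such generating map exists, and the underlying Hurewicz claim is false. The Wu formula, applied to $y_{4i+1}=\sigma(c_{2i+1})$ in $H^*(\SU(2n);\Z_2)$ and transported back along the injection $p^*$ that you yourself use, gives $\Sq^4x_5=x_9$ in $H^9(\mathrm{AII};\Z_2)$ (there are no decomposables in degree $9$, so the relation is exact). Consequently, for any map $f\colon S^9\to\mathrm{AII}$ we get $f^*(x_9)=f^*(\Sq^4x_5)=\Sq^4f^*(x_5)=0$ because $H^5(S^9)=0$; thus no spherical class detects $x_9$ mod $2$, and no wedge of spheres can induce an isomorphism $QH^*(\mathrm{AII};\Z_2)\to\widetilde{H}^*(A;\Z_2)$. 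Note the internal tension in your argument: condition (2), which you prove, records precisely the nontrivial linear action of $\Sq^4$ linking consecutive generators, and that action is what a wedge of spheres cannot support. This is exactly why the paper instead cites Mukai and Oka \cite{MO} for a generating map $g\colon\Sigma\H P^{n-1}\to\SU(2n)/\Sp(n)$; the source $\Sigma\H P^{n-1}$ satisfies $\Sq^4\Sigma u=\Sigma u^2$ and is therefore compatible with $\Sq^4x_5=x_9$. Replacing your wedge of spheres by this (or some other genuine generating complex) repairs the proof; as written, the key input does not exist.
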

	
	\begin{proof}
		By \cite[Theorem 6.7]{MT} together with the Wu formula, the mod $2$ cohomology of $\mathrm{AII}$ is given by
		\[
		H^*(\mathrm{AII};\Z_2)=\Lambda(x_5,x_9,\ldots,x_{4n-3}),\quad|x_{4i+1}|=4i+1
		\]
		such that $\Sq x_{4i+1}$ is a linear combination of $x_5,\ldots,x_{4n-3}$ for $i=1,\ldots,n-1$. Mukai and Oka \cite{MO} showed that there is a generating map $g\colon\Sigma\H P^{n-1}\to\SU(2n)/\Sp(n)$. Then since $\min\{|x_5|,\ldots,|x_{4n-3}|\}=5$, it follows from Lemma \ref{projective plane} that the Whitehead product $[g,g]$ must be non-trivial. Thus the proof is finished by Lemma \ref{non-commutative}.
	\end{proof}

	\begin{proposition}
		\label{EIV}
		The loop space of $\mathrm{EIV}$ is not homotopy commutative.
	\end{proposition}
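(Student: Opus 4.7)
The plan is to apply Lemma \ref{projective plane} at the prime $2$, mirroring the proof of Proposition \ref{AII}. First, I would identify $H^*(\mathrm{EIV};\Z_2)$. Using the Serre spectral sequence of the fibration $\F_4\to\E_6\to\mathrm{EIV}$ together with the mod $2$ cohomologies of $\E_6$ and $\F_4$ due to Borel, the classes $x_9,x_{17}\in H^*(\E_6;\Z_2)$ transgress trivially, yielding
\[
H^*(\mathrm{EIV};\Z_2)=\Lambda(x_9,x_{17}),\qquad |x_9|=9,\ |x_{17}|=17.
\]
Both generators have odd degree, giving condition (1) of Lemma \ref{projective plane}.

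Next I would verify condition (2). Since $H^*(\mathrm{EIV};\Z_2)$ is concentrated in degrees $0,9,17,26$, the only Steenrod squares on the generators that could fall outside the linear span $\langle x_9,x_{17}\rangle$ are $\Sq^8 x_9\in H^{17}$ and $\Sq^9 x_{17}\in H^{26}$. The relation $\Sq^8 x_9=x_{17}$ is inherited from $H^*(\E_6;\Z_2)$ via the spectral sequence and is already linear. For $\Sq^9 x_{17}$, I would substitute $x_{17}=\Sq^8 x_9$ and apply the Adem relation $\Sq^9\Sq^8=\Sq^{15}\Sq^2+\Sq^{13}\Sq^4$, which gives $\Sq^9 x_{17}=\Sq^{15}\Sq^2 x_9+\Sq^{13}\Sq^4 x_9=0$ since $\Sq^2 x_9,\Sq^4 x_9$ lie in the trivial degrees $11$ and $13$. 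Hence $\Sq x_9=x_9+x_{17}$ and $\Sq x_{17}=x_{17}$ in $\langle x_9,x_{17}\rangle$.

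Third, I would produce a generating map $g\colon A=\Sigma B\to\mathrm{EIV}$ with $\widetilde H^*(A;\Z_2)$ concentrated in degrees $9$ and $17$. The natural candidate is $A=\Sigma\O P^2$, since $H^*(\O P^2;\Z_2)=\Z_2[u]/(u^3)$ with $|u|=8$. Because $\mathrm{EIV}$ is $8$-connected with $H^9(\mathrm{EIV};\Z)\cong H^{17}(\mathrm{EIV};\Z)\cong\Z$, one can realise $g$ by an obstruction-theoretic construction on the $17$-skeleton of $\mathrm{EIV}$, or alternatively via a geometric map arising from the embedding of the Cayley plane into the exceptional Jordan algebra $J_3(\O)$ on which $\E_6$ acts. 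Since $\min\{9,17\}=9$ is not of the form $2^k-1$ for any $k\ge 1$, Lemma \ref{projective plane} yields $[g,g]\ne 0$, and Lemma \ref{non-commutative} then completes the proof.

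The main obstacle I expect is the third step, producing the generating map whose domain is a suspension: the algebraic data (cohomology identification, Steenrod computation via one Adem relation) is essentially forced by degree considerations, and the final appeal to Lemmas \ref{projective plane} and \ref{non-commutative} is formal. Ensuring that the $17$-skeleton of $\mathrm{EIV}$ is, up to homotopy, a suspension of an $8$-connected complex—or alternatively, exhibiting an explicit generating map from $\Sigma\O P^2$ into $\mathrm{EIV}$—is where genuine work will be required.
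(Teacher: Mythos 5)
Your proposal follows essentially the same route as the paper: identify $H^*(\mathrm{EIV};\Z_2)=\Lambda(x_9,x_{17})$, check that $\Sq x_9=x_9+x_{17}$ and $\Sq x_{17}=x_{17}$ (your Adem computation $\Sq^9\Sq^8=\Sq^{15}\Sq^2+\Sq^{13}\Sq^4$ is correct and is exactly the relation the paper invokes), and feed a generating map $g\colon\Sigma\O P^2\to\mathrm{EIV}$ into Lemma \ref{projective plane}. The one step you leave open --- producing the generating map with suspension domain --- is precisely what the paper settles by citing Araki's cell decomposition $\mathrm{EIV}=(\Sigma\O P^2)\cup e^{26}$, from which $g$ is just the skeletal inclusion and the Steenrod action is read off the cell structure rather than from the fibration $\F_4\to\E_6\to\mathrm{EIV}$. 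Your worry here is legitimate but more easily dispatched than you suggest: the $17$-skeleton of $\mathrm{EIV}$ is a two-cell complex $S^9\cup_\alpha e^{17}$, and since the suspension $\pi_{15}(S^8)\to\pi_{16}(S^9)$ is surjective by Freudenthal, $\alpha$ desuspends and the skeleton is automatically of the form $\Sigma B$ with $B$ a two-cell complex; that is all Lemma \ref{projective plane} requires (one does not need $B$ to be $\O P^2$ on the nose). So your outline is correct and would close up with either that elementary observation or the reference to Araki.
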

	
	\begin{proof}
		In \cite{A}, Araki showed there is a cell decomposition $\mathrm{EIV}=(\Sigma\O P^2)\cup e^{26}$, and the cohomology of $\mathrm{EIV}$ is given by
		\[
		H^*(\mathrm{EIV};\Z)=\Lambda(x_9,x_{17}),\quad|x_i|=i.
		\]
		Then the inclusion $g\colon\Sigma\O P^2\to\mathrm{EIV}$ is a generating map. By the above cell decomposition, we have $\Sq x_9=x_9+x_{17}$, so by the Adem relation, we have $\Sq x_{17}=x_{17}$ too. Thus by Lemma \ref{projective plane}, the Whitehead product $[g,g]$ must be non-trivial, so the proof is finished by Lemma \ref{non-commutative}.
	\end{proof}

	Finally, we are ready to prove Theorem \ref{main}.
	\begin{proof}
		[Proof of Theorem \ref{main}]
		Combine \cite[Theorem 1.1]{KTT} and Propositions \ref{EII}, \ref{EV-EVIII}, \ref{EVI-EIX-FI}, \ref{AI}, \ref{BDI}, \ref{CII}, \ref{EI}, \ref{FII}, \ref{G}, \ref{AII}, and \ref{EIV}.
	\end{proof}

\end{document}